\newtheorem{Thm}{Theorem}
\newtheorem{Lem}[Thm]{Lemma}
\newtheorem{Prop}[Thm]{Proposition}
\newtheorem{Cor}[Thm]{Corollary}
\def\mbN{\mathbb{N}}
\def\mbZ{\mathbb{Z}}
\def\mbQ{\mathbb{Q}}
\def\mbR{\mathbb{R}}
\title{The second homology group of the homological Goldman Lie algebra}
\author{Kazuki Toda}
\begin{document}
\maketitle
\begin{abstract}
We determine the second homology group of the homological Goldman Lie algebra for an oriented surface. 
\end{abstract}
\section{Introduction}
\if0
By a {\it surface}, we mean an oriented two-dimensional smooth manifold possibly with boundary. 
The first homology group of a surface and its intersection form reflect the topological structure of the surface. 
For example, they have information about the genus and the boundary components of the surface. 

To study them in detail, we consider a Lie algebra coming from them. 
We call it the homological Goldman Lie algebra of the first homology group of the surface. 
Goldman introduced the Lie algebra for study of the moduli space of $GL_1(\mbR )$-flat bundles over the surface \cite[p.295-p.297]{G}. 
We define the Lie algebra in a more general setting. 
Let $H$ be a $\mbZ$-module, i.e., an abelian group, which is not necessary finitely generated, 
and $\langle-,-\rangle :H\times H \rightarrow \mbZ$, $(x,y)\mapsto \langle x,y \rangle$, an alternating $\mbZ$-bilinear form. 
For example, we consider $H$ is the first homology group, and $\langle -,-\rangle$ is the intersection form of a surface. 
We define a $\mbZ$-linear map 
$\mu :H\rightarrow {\rm Hom} _{\mbZ}(H,\mbZ )$ 
by 
$\mu (x)(y)=\langle x,y\rangle$. 
Denote by $\mbQ [H]$ the $\mbQ$-vector space with basis the set $H$;
\[
\mbQ [H] :=\left\{ \sum _{i=1}^n c_i[x_i] \mid n \in \mbN ,c_i \in \mbQ ,x_i \in H \right\} ,
\]
where $[-]:H\rightarrow \mbQ H$ is the embedding as basis. 
We remark $c[x]\neq [cx] \in \mbQ H$ if $c\neq 1$. 
We define a $\mbQ$-bilinear map $[-,-]:\mbQ [H]\times \mbQ [H]\rightarrow \mbQ [H]$ by 
$[[x],[y]]:=\langle x,y\rangle [x+y]$ for $x,y\in H$. 
It is easy to see that this bilinear map is skew and satisfies the Jacobi identity \cite[p.295-p.297]{G}. 
The Lie algebra $(\mbQ [H],[-,-])$ is called the {\it homological Goldman Lie algebra of $(H,\langle -,-\rangle )$}. 
The Lie algebra $\mbQ [H]$ equips a product $\mbQ [H]\times \mbQ [H]\rightarrow \mbQ [H]$, $([x],[y])\mapsto [x][y]=[x+y]$ as a group ring. 
Then $\mbQ [H]$ is a Poisson algebra. 

The Lie algebra $\mbQ [H]$ is introduced by Goldman \cite[p.295-p.297]{G} to study the moduli space of $GL_1(\mbR )$-flat bundles over the surface \cite[p.295-p.297]{G}. 
Goldman introduced a more geometric Lie algebra as follows. 
Let $\Sigma$ be an oriented surface. 
Denote by $[S^1,\Sigma ]$ the set of free homotopy classes of free loops on $\Sigma$. 
For two free loops $\alpha ,\beta$ on $\Sigma$ in general position, we define $[\alpha ,\beta ]:=\sum _{p \in \alpha \cap \beta}\varepsilon (p;\alpha ,\beta ) [ \alpha \cdot _p \beta ]$, where $\varepsilon (p;\alpha ,\beta )$ is the local intersection number of $\alpha$ and $\beta$ at $p$, and $\alpha \cdot _p\beta$ is the free homotopy class of the product in the fundamental group with base point $p$. 
The bracket induces a well-defined operator in $\mbQ [[S^1,\Sigma ]]$, the free module with basis the set of homotopy classes of free loops, and the operator is skew and satisfies the Jacobi identity \cite{G}. 
The Lie algebra $(\mbQ [[S^1,\Sigma ]],[-,-])$ is called the {\it Goldman Lie algebra of $\Sigma$}. 
Goldman used the Lie algebra $\mbQ [[S^1,\Sigma ]]$ for study of the space of representations on the fundamental group $\pi _1(\Sigma )$ of the surface $\Sigma$. 
The natural projection $[S^1;\Sigma ]\to H_1(\Sigma ;\mbZ )$ induces the surjective Lie algebra homomorphism $\mbQ [[S^1,\Sigma ]]\to \mbQ [H_1(\Sigma ;\mbZ )]$. 
Hence we can have some information of the Goldman Lie algebra through the homomorphism $\mbQ [[S^1,\Sigma ]]\to \mbQ [H_1(\Sigma ;\mbZ )]$ from information of the homological Goldman Lie algebra $\mbQ [H_1(\Sigma ;\mbZ )]$. 

We determine the second homology group \cite{H-S} of the homological Goldman Lie algebra. 
Our main theorem in this paper is the following. 
\begin{Thm}\label{2nd homology thm}
    If $\langle -,-\rangle \neq 0$, we have 
    \[
        H_2(\mbQ [H^{(1)}])\cong \bigoplus _{z\in {\rm ker}\mu}\mbQ \otimes (H/\mbZ z), \quad 
        H_2(\mbQ [H])=(\wedge ^2\mbQ [{\rm ker}\mu ])\oplus H_2(\mbQ [H^{(1)}]). 
    \]
\end{Thm}
By universal coefficient theorem, we also have the second cohomology group. 
\fi

    By a {\it surface}, we mean an oriented two-dimensional smooth manifold possibly with boundary. 
    The first homology group of a surface and its intersection form reflect the topological structure of the surface. 
    For example, they have information about the genus and the boundary components of the surface. 
    To study them in detail, we consider a Lie algebra coming from them. 
    Goldman introduced the Lie algebra for study of the moduli space of  $GL_1(\mbR )$-flat bundles over the surface. 
    We define the Lie algebra in more general setting. 
    
    Let $H$ be a $\mbZ$-module, which is not necessary finitely generated, and $\langle -,-\rangle :H\times H\to \mbZ$, $(x,y)\mapsto \langle x,y\rangle$, an alternating $\mbZ$-bilinear form. 
    For example, we consider that $H$ is the first homology group of a surface and $\langle -,-\rangle$ is its intersection form. 
    We define $\mbZ$-linear map $\mu :H\to {\rm Hom}_{\mbZ}(H,\mbZ )$ by $\mu (x)(y)=\langle x,y\rangle$ for $x,y\in H$. 
    Denote by $\mbQ [H]$ the $\mbQ$-vector space with basis the set $H$; 
    \[
        \mbQ [H]=\left\{ \sum _{i=1}^m c_i[x_i] | n\in \mbN ,c_i \in \mbQ ,x_i \in H \right\} , 
    \]
    where $[-]:H\to \mbQ [H]$ is the embedding as basis. 
    We remark $c[x]\neq [c x]\in \mbQ [H]$ for $c\neq 1$. 
    We define $\mbQ$-linear map $[-,-]:\mbQ [H]\times \mbQ [H]\to \mbQ [H]$ by $[[x],[y]]=\langle x,y\rangle [x+y]$ for $x,y\in H$. 
    It is easy to see that this bilinear map is skew and satisfies the Jacobi identity \cite[p.295-p.297]{G}. 
    The Lie algebra $(\mbQ [H],[-,-])$ is called the {\it homological Goldman Lie algebra of $(H,\langle -,-\rangle )$}. 
    
    Our purpose is to study the algebraic structure of the homological Goldman Lie algebra. 
    In the previous paper \cite{T}, we determined all the ideals of the homological Goldman Lie algebra. 
    In particular, the derived Lie subalgebra $\mbQ [H]^{(1)}=[\mbQ [H],\mbQ [H]]$ of $\mbQ [H]$ is the $\mbQ$-vector subspace $\mbQ [H\setminus \ker \mu ]$ with the basis the set $H\setminus \ker \mu$, the center $\mathfrak{z}(\mbQ [H])$ of $\mbQ [H]$ is the $\mbQ$-vector subspace $\mbQ [\ker \mu ]$ with the basis the set $\ker \mu$, and the abelianization $\mbQ [H]^{\rm ab}=\mbQ [H]/\mbQ [H] ^{(1)}$ of $\mbQ [H]$ equals the center. 
    In other words, the first homology group $H_1(\mbQ [H])$ of $\mbQ [H]$ equals the center. 
    In the present paper, we determine the second homology group \cite{H-S} of the homological Goldman Lie algebra. 
    That is, our main theorem in this paper is the following. 
    \begin{Thm} \label{2nd homology thm}
        If $\langle -,-\rangle \neq 0$, we have isomorphisms
        \[
            H_2(\mbQ [H]^{(1)})\to \bigoplus _{z\in \ker \mu}\mbQ \otimes (H/\mbZ z) \mbox{ and } 
        \]
        \[
            H_2(\mbQ [H])\to (\wedge ^2\mbQ [\ker \mu ]) \oplus H_2(\mbQ [H]^{(1)}) . 
        \]
    \end{Thm}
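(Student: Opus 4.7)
The plan is to use the Künneth formula to decouple the contribution of the center, then compute $H_2(\mbQ [H]^{(1)})$ by exploiting the natural $H$-grading on the Chevalley--Eilenberg complex. Since $\mbQ [\ker \mu ]$ is the center, there is a decomposition of Lie algebras $\mbQ [H] = \mbQ [H]^{(1)} \oplus \mbQ [\ker \mu ]$ in which the second summand is abelian, and Künneth gives
\[
H_2(\mbQ [H]) \cong H_2(\mbQ [H]^{(1)}) \oplus \bigl( H_1(\mbQ [H]^{(1)}) \otimes \mbQ [\ker \mu ] \bigr) \oplus \wedge ^2 \mbQ [\ker \mu ].
\]
To eliminate the middle summand I would verify that $\mbQ [H]^{(1)}$ is perfect: given $c \in H \setminus \ker \mu$, any $a \in H$ with $\langle a, c\rangle \neq 0$ automatically lies outside $\ker \mu$, and so does $c - a$ (otherwise $\langle c-a, c\rangle = -\langle a, c\rangle$ would equal $0$); hence $[c] = \langle a, c\rangle^{-1}[[a], [c-a]]$ is a commutator. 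This reduces the theorem to the first isomorphism.

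Setting $f_x := [x]\wedge [z-x]$, the Chevalley--Eilenberg complex for $\mbQ [H]^{(1)}$ is $H$-graded, and $(\wedge ^2 \mbQ [H]^{(1)})_z$ is spanned by the $f_x$ with $x, z-x \in H \setminus \ker \mu$, subject to $f_{z-x} = -f_x$. For $z \notin \ker \mu$, the differential $\partial _2 f_x = \langle x, z\rangle [z]$ surjects onto $\mbQ [z]$, so the target is to prove $\ker \partial _2 \subseteq \mathrm{im}\,\partial _3$. Fix $x_0$ with $\langle x_0, z\rangle \neq 0$; then $\ker \partial _2$ is spanned by the elements $\langle x_0, z\rangle f_x - \langle x, z\rangle f_{x_0}$. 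A direct computation gives
\[
\partial _3([x_0]\wedge [x]\wedge [z-x_0-x]) = \langle x_0, x\rangle (f_{x_0+x} - f_x - f_{x_0}) + \bigl( \langle x_0, z\rangle f_x - \langle x, z\rangle f_{x_0}\bigr) .
\]
When $\langle x_0, x\rangle = 0$ this exhibits the desired kernel element as a boundary. The remaining case is reduced to the first by combining this identity with its analogue for an auxiliary $y \in H$ satisfying $\langle x_0, y\rangle = \langle x, y\rangle = 0$ and $\langle y, z\rangle \neq 0$; existence of such $y$ reduces to the fact that $H \otimes \mbQ$ is not the union of finitely many proper hyperplanes as a $\mbQ$-vector space.

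For $z \in \ker \mu$, $\partial _2$ vanishes in degree $z$; and for any 3-chain $[a]\wedge [b]\wedge [c]$ with $a+b+c = z$ the constraint $\langle a, z\rangle = 0$ forces $\langle a, b\rangle = \langle b, c\rangle = -\langle a, c\rangle$, yielding $\partial _3([a]\wedge [b]\wedge [c]) = -\langle a, b\rangle (f_a + f_b + f_c)$. Hence $H_2(\mbQ [H]^{(1)})_z$ is presented by the $f_x$, $x \in H \setminus \ker \mu$, modulo $f_{z-x} + f_x = 0$ and $f_a + f_b + f_c = 0$ whenever $a+b+c = z$ and $\langle a, b\rangle \neq 0$. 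The map $\sigma (f_x) = 1 \otimes x \in \mbQ \otimes (H/\mbZ z)$ respects both relations (since $1 \otimes z = 0$ in the quotient) and is surjective. For the inverse I extend $x \mapsto f_x$ from $H \setminus \ker \mu$ to a group homomorphism $H \to H_2(\mbQ [H]^{(1)})_z$, using the three-term relation together with antisymmetry to give additivity $f_{a+b} = f_a + f_b$ directly when $\langle a, b\rangle \neq 0$ and bootstrapping to the general case via an auxiliary element $c$ with $\langle a, c\rangle$, $\langle b, c\rangle$, $\langle a+b, c\rangle$ all nonzero; the resulting homomorphism sends $z$ to $0$ and factors through $H/\mbZ z$. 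The most delicate step will be the vanishing in degree $z \notin \ker \mu$: the genericity arguments for the auxiliary $y$ must be carried out without any finite-generation or nondegeneracy assumption on $H$ beyond $\langle -,-\rangle \neq 0$, and this is where the main work of the proof lies.
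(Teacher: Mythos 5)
Your overall strategy is the same as the paper's: split off the central summand $\mbQ [\ker \mu ]$ (the paper does this via the decomposition $\mbQ [H]=\mbQ [\ker \mu ]\oplus \mbQ [H^{(1)}]$; your Künneth-plus-perfectness argument for why the cross term dies is correct), grade the Chevalley--Eilenberg complex by the total degree $z\in H$, identify the components with $z\in \ker \mu$ with $\mbQ \otimes (H/\mbZ z)$ via $[x]\wedge [z-x]\mapsto 1\otimes x$, and show the components with $z\notin \ker \mu$ vanish. Your inner-component computation is sound in outline and essentially reproduces Propositions \ref{QotimesH/Zz}, \ref{inj} and \ref{surj}; in particular the auxiliary element $c$ with $\langle a,c\rangle ,\langle b,c\rangle ,\langle a+b,c\rangle$ all nonzero does exist, by the same argument as Lemma \ref{linear extension}.

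The gap is in the outer component, exactly where you flagged the ``main work.'' For $z\notin \ker \mu$ and $\langle x_0,x\rangle \neq 0$ you invoke a $y$ with $\langle x_0,y\rangle =\langle x,y\rangle =0$ and $\langle y,z\rangle \neq 0$, justified by ``$H\otimes \mbQ$ is not a union of finitely many hyperplanes.'' That principle does not apply here: two of your three constraints are \emph{equalities}, so you are really asking whether the subgroup $\ker (\mu (x_0))\cap \ker (\mu (x))$ fails to be contained in $\ker (\mu (z))$, and it can be contained. Take $H=\mbZ ^2$ with $\langle e_1,e_2\rangle =1$: for any $x_0,x$ with $\langle x_0,x\rangle \neq 0$ they generate a finite-index subgroup, so the only $y$ orthogonal to both is $y=0$, which has $\langle y,z\rangle =0$. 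Meanwhile, with $z=e_1$, $x_0=e_2$, $x=e_1+e_2$, the cycle $\langle x_0,z\rangle f_x-\langle x,z\rangle f_{x_0}=f_{e_2}-f_{e_1+e_2}$ is a nonzero element of $\ker \partial _2$ that your reduction never reaches. (A smaller point: your first case also uses implicitly that $z-x_0-x\notin \ker \mu$ so that $[x_0]\wedge [x]\wedge [z-x_0-x]$ is a legal chain of $\mbQ [H]^{(1)}$; this happens to hold when $\langle x_0,x\rangle =0$ because then $\langle z-x_0-x,x_0\rangle =-\langle x_0,z\rangle \neq 0$, but it shows how cramped $C_*(\mbQ [H]^{(1)})$ is.) The paper sidesteps both issues by proving $H_2(\mbQ [H])_{(z)}=0$ for $z\in H^{(1)}$ instead, via an explicit contracting homotopy $\Phi$ built from a single $y$ with $\langle y,z\rangle \neq 0$ (Proposition \ref{outer}); the auxiliary chains such as $[2y]\wedge [u-y]\wedge [v-y]$ are permitted to involve elements of $\ker \mu$ because one works in the full algebra, and the vanishing for $\mbQ [H^{(1)}]$ then follows from the retraction $\varpi ^{(1)}\circ \iota ^{(1)}=\mathrm{id}$. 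You need either that enlargement or a genuinely different argument for the case $\langle x_0,x\rangle \neq 0$; as written, the step fails already for the rank-two symplectic lattice.
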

    Goldman introduced a more geometric Lie algebra as follows. 
    Let $\Sigma$ be a surface. 
    Denote by $[S^1,\Sigma ]$ the set of free homotopy classes of free loops on $\Sigma$. 
    For two free loops $\alpha$ and $\beta$ on $\Sigma$ in general position, we define $[\alpha ,\beta ]=\sum _{p\in \alpha \cap \beta}\varepsilon (p;\alpha ,\beta )[\alpha \cdot _p \beta ]$, where $\varepsilon (p;\alpha ,\beta )$ is the local intersection number of $\alpha$ and $\beta$ at $p$, and $\alpha \cdot _p \beta$ is the free homotopy class of the product in the fundamental group $\pi _1(\Sigma ,p)$ with base point $p$. 
    The bracket induces a well-defined binary operation in $\mbQ [[S^1,\Sigma ]]$, the $\mbQ$-vector space with basis the set of homotopy classes of free loops, and the operation is skew and satisfies the Jacobi identity \cite{G}. 
    The Lie algebra $(\mbQ [[S^1,\Sigma ]],[-,-])$ is called the {\it Goldman Lie algebra of $\Sigma$}. 
    Goldman used the Lie algebra $\mbQ [[S^1,\Sigma ]]$ for study of the space of representations on the fundamental group $\pi _1(\Sigma )$ of the surface $\Sigma$. 
    The natural projection $[S^1,\Sigma ]\to H_1(\Sigma ;\mbZ )$ induces the surjective Lie algebra homomorphism $\mbQ [[S^1,\Sigma ]]\to \mbQ [H_1(\Sigma ;\mbZ )]$. 
    Hence we can have some information of the Goldman Lie algebra through the homomorphism $\mbQ [[S^1,\Sigma ]]\to \mbQ [H_1(\Sigma ;\mbZ )]$. 
    In fact, as will be shown in Theorem \ref{Goldman 2nd hom}, we have that if $\Sigma$ is connected, then the composition $H_2(\mbQ [[S^1,\Sigma ]])\to H_2(\mbQ [H_1(\Sigma ;\mbZ )]) \to H_2(\mbQ [H_1(\Sigma ;\mbZ )]^{(1)})$ is surjective, where the map $H_2(\mbQ [H_1(\Sigma ;\mbZ )]) \to H_2(\mbQ [H_1(\Sigma ;\mbZ )]^{(1)})$ is the induced map by the projection $\varpi ^{(1)}:\mbQ [H_1(\Sigma ;\mbZ )]=\mbQ [\ker \mu ]\oplus \mbQ [H_1(\Sigma ;\mbZ )\setminus \ker \mu] \to \mbQ [H_1(\Sigma ;\mbZ )\setminus \ker \mu ]$. 
    
    Define the $\mbQ$-linear map $K:\mbQ [H]\to \mbQ \bigotimes _{\mbZ}H$ by $K([x])=1\otimes x$ for $x\in H$. 
    The $\mbQ$-vector subspace $\mathfrak{g}_K:=\ker K$ is a Lie subalgebra of $\mbQ [H]$. 
    In Theorem \ref{G_K 2nd hom}, we prove that if $\langle -,-\rangle \neq 0$, then the composition $H_2(\mathfrak{g}_K)\to H_2(\mbQ [H])\to H_2(\mbQ [H]^{(1)})$ is surjective. 
    The Lie subalgebra $\mathfrak{g}_K$ is related to the Kontsevich's "commutative" \cite{Kon} as follows. 
    Assume that $H$ has a symplectic basis $\{ x_i,y_i\} _{i=1}^g$. 
    Then $\mbQ \otimes H$ has a symplectic basis $\{ 1\otimes x_i,1\otimes y_i\} _{i=1}^g$. 
    The symmetric algebra $\prod _{m=0}^{\infty} {\rm Sym}^m(\mbQ \otimes H)$ equips a Poisson bracket defined by
    \[
        [f,h] = \sum _{i=1}^g\left( \frac{\partial f}{\partial (1\otimes x_i)}\frac{\partial h}{\partial (1\otimes y_i)}-\frac{\partial h}{\partial (1\otimes x_i)}\frac{\partial f}{\partial (1\otimes y_i)}\right) . 
    \]
    where $f,h\in \prod _{m=0}^{\infty} {\rm Sym} ^m(\mbQ \otimes H)$ is regarded as formal power series in variables $\{ 1\otimes x_i,1\otimes y_i\} _{i=1}^g$. 
    The constant terms $\mbQ \subset \prod _{m=0}^{\infty} {\rm Sym} ^m(\mbQ \otimes H)$ is included in the center of $\prod _{m=0}^{\infty} {\rm Sym} ^m(\mbQ \otimes H)$. 
    Hence $\prod _{m=0}^{\infty} {\rm Sym} ^m(\mbQ \otimes H)/\mbQ$ has a natural Lie bracket. 
    The composite of the inclusion and the quotient projection $\prod _{m=1}^{\infty} {\rm Sym} ^m(\mbQ \otimes H)\to \prod _{m=0}^{\infty} {\rm Sym} ^m(\mbQ \otimes H)/\mbQ$ is a $\mbQ$-linear isomorphism. 
    Then $\prod _{m=1}^{\infty} {\rm Sym} ^m(\mbQ \otimes H)$ also has a Lie bracket. 
    The $\mbQ$-linear map $\mbQ [H] \to \prod _{m=0}^{\infty} {\rm Sym} ^m(\mbQ \otimes H)$, $[x]\mapsto {\rm exp}(1\otimes x)$, is a Lie algebra homomorphism. 
    By composition, we have a Lie algebra homomorphism $\mbQ [H]\to \prod _{m=1}^{\infty} {\rm Sym} ^m(\mbQ \otimes H)$. 
    We denote $\mathfrak{c}_g:=\prod _{m=2}^{\infty} {\rm Sym}^m (\mbQ \otimes H)$, which is a Lie subalgebra of $\prod _{m=1}^{\infty} {\rm Sym}^m (\mbQ \otimes H)$ and the degree completion of Kontsevich's "commutative". 
    The Lie subalgebra $\mathfrak{g}_K$ is exactly the inverse image of $\mathfrak{c}_g$ by the Lie algebra homomorphism stated above. 
    The Lie algebra $\mathfrak{c}_g$ equals the Lie algebra of formal Hamiltonian vector fields $\mathfrak{ham}_{2 g}$, which has been studied in the context of symplectic foliation \cite{GKF} \cite{M} \cite{K-M}. 
    It would be interesting if we could describe the relation between the second cohomology group of $\mathfrak{ham}_{2 g}$ and that of our $\mathfrak{g}_K$. 
    
    In Theorem \ref{3rd coh}, we construct an explicit nontrivial cohomology class in $H^3(\mbQ [H])$ if $\langle -,-\rangle \neq 0$. 
    In particular, we have $H^3(\mbQ [H])\neq 0$.

\section{Preparation}
\subsection{Derived algebra of some subalgebra}
Let $S$ be a subset of $H$. 
Then we define $S^{(1)}:=\{ u+v\in H \mid u,v\in S ,\langle u,v\rangle \neq 0\}$.  
For example, we have 
\[
H^{(1)}=H\setminus {\rm ker}\mu . 
\]
In fact, assume $x\in H^{(1)}$. 
Then there exist $u$ and $v\in H$ with $\langle u,v\rangle \neq 0$ and $x=u+v$. 
Since $\langle x,v\rangle =\langle u,v\rangle \neq 0$, we have $x\in H\setminus {\rm ker}\mu$ . 
Conversely, assume $x\in H\setminus {\rm ker}\mu$. 
Then there exists $y\in H$ with $\langle x,y\rangle \neq 0$. 
Since $\langle x-y,y\rangle =\langle x,y\rangle \neq 0$ and $x=(x-y)+y$, we have $x\in H^{(1)}$. 
\begin{Prop}
$\mbQ [S]$ is a subalgebra of $\mbQ [H]$ if and only if $S^{(1)}\subset S$. 
Then, $(\mbQ [S])^{(1)}=[\mbQ [S],\mbQ [S]]=\mbQ [S^{(1)}]$. 
\end{Prop}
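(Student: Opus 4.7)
The plan is to prove the equivalence first and then derive the formula for the derived subalgebra directly from the same bracket computation; both directions reduce to the one-line identity $[[u],[v]]=\langle u,v\rangle[u+v]$.

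For the forward direction, I would assume $\mbQ[S]$ is a Lie subalgebra of $\mbQ[H]$ and take an arbitrary $x\in S^{(1)}$. By definition of $S^{(1)}$, we can write $x=u+v$ with $u,v\in S$ and $\langle u,v\rangle\neq 0$. Then $\langle u,v\rangle[x]=[[u],[v]]\in[\mbQ[S],\mbQ[S]]\subset\mbQ[S]$, and dividing by the nonzero scalar $\langle u,v\rangle$ yields $[x]\in\mbQ[S]$, i.e.\ $x\in S$. Hence $S^{(1)}\subset S$.

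For the converse, suppose $S^{(1)}\subset S$. It suffices to check closure on the spanning set, so pick $u,v\in S$. If $\langle u,v\rangle=0$, then $[[u],[v]]=0\in\mbQ[S]$; otherwise $u+v\in S^{(1)}\subset S$, so $[[u],[v]]=\langle u,v\rangle[u+v]\in\mbQ[S]$. Extending bilinearly, $[\mbQ[S],\mbQ[S]]\subset\mbQ[S]$, so $\mbQ[S]$ is a subalgebra.

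For the identification $[\mbQ[S],\mbQ[S]]=\mbQ[S^{(1)}]$, the inclusion $\subset$ is immediate from the bracket formula: every nonzero bracket of basis elements is a scalar multiple of some $[u+v]$ with $u+v\in S^{(1)}$. For $\supset$, each generator $[x]\in\mbQ[S^{(1)}]$ equals $\langle u,v\rangle^{-1}[[u],[v]]$ for a suitable decomposition $x=u+v$ with $u,v\in S$, $\langle u,v\rangle\neq 0$, giving the reverse inclusion. I do not foresee a genuine obstacle here; the only small point worth stating cleanly is that $S^{(1)}\subset S$ is exactly the condition needed so that writing elements of the bracket in the basis does not produce terms outside $S$, and this is the same calculation that powers the identification of $H^{(1)}$ with $H\setminus\ker\mu$ recalled just above.
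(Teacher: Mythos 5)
Your proposal is correct and follows essentially the same route as the paper: the forward direction recovers $[x]=\frac{1}{\langle u,v\rangle}[[u],[v]]$ to get $S^{(1)}\subset S$ (and simultaneously $\mbQ[S^{(1)}]\subset[\mbQ[S],\mbQ[S]]$), and the converse plus the identification both come from the case split on $\langle u,v\rangle$ applied to basis elements, exactly as in the paper's proof. No gaps.
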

\begin{proof}
Assume $\mbQ [S]$ is a subalgebra of $\mbQ [H]$. 
For $x\in S^{(1)}$, there exist $u$ and $v\in S$ with $x=u+v$ and $\langle u,v\rangle \neq 0$. 
We have $[x]=\frac{1}{\langle u,v\rangle}[[u],[v]]\in [\mbQ [S],\mbQ [S]]\subset \mbQ [S]$. 
Hence, we have $x\in S$. 
This proves $S^{(1)}\subset S$ and $\mbQ [S^{(1)}]\subset [\mbQ [S],\mbQ [S]]$. 

Assume $S^{(1)}\subset S$. 
It is trivial $\mbQ [S]$ is a subspace of $\mbQ [H]$ as $\mbQ$-vector space. 
For $u$ and $v\in S$, we have 
\[
[[u],[v]]
=
\left\{ 
\begin{matrix}
0, & \mbox{if\ } \langle u,v\rangle =0, \\
\langle u,v\rangle [u+v], & \mbox{if\ } \langle u,v\rangle \neq 0 . 
\end{matrix}
\right. 
\]
In both cases we have $[[u],[v]]\in \mbQ [S^{(1)}]\subset \mbQ [S]$. 
Hence $\mbQ [S]$ is a subalgebra of $\mbQ [H]$. 
And this shows $[\mbQ [S],\mbQ [S]]\subset \mbQ [S^{(1)}]$. 
This completes the proof of the proposition. 
\end{proof}
The center $\mathfrak{z}(\mbQ [H])$ of $\mbQ [H]$ is $\mbQ [{\rm ker}\mu ]$. 
In fact, assume $u\in {\rm ker}\mu$. 
Then we have $[u] \in \mathfrak{z}(\mbQ [H])$, since $[[u],[v]]=\langle u,v\rangle [u+v]=0$ for $v\in H$. 
Conversely, assume $u \in H\setminus {\rm ker}\mu =H^{(1)}$. 
Then there exists $v\in H$ with $\langle u,v\rangle \neq 0$. 
Then we have $[u] \in \mbQ [H]\setminus \mathfrak{z}(\mbQ [H])$ since $[[u],[v]]=\langle u,v\rangle [u+v]\neq 0$. 

The inclusion $\iota ^{(1)}:\mbQ [H^{(1)}]\rightarrow \mbQ [H]$ induces the inclusion homomorphism $C_p(\iota ^{(1)}):C_p(\mbQ [H^{(1)}])\rightarrow C_p(\mbQ [H])$ and the restriction $C^p(\iota ^{(1)}):C^p(\mbQ [H])\rightarrow C^p(\mbQ [H^{(1)}])$. 
On the other hand, 
we can decompose $\mbQ [H]$ into the center and the derived subalgebra, $\mbQ [H]=\mbQ [{\rm ker}\mu ]\oplus \mbQ [H^{(1)}]$. 
The projection $\varpi ^{(1)}:\mbQ [H]\rightarrow \mbQ [H^{(1)}]$ of the decomposition induces the projection $C_p(\varpi ^{(1)}):C_p(\mbQ [H])\rightarrow C_p(\mbQ [H^{(1)}])$ and the zero extension $C^p(\varpi ^{(1)}) :C^p(\mbQ [H^{(1)}])\rightarrow C^p(\mbQ [H])$. 
The chain maps $C_*(\iota ^{(1)})$ and $C_*(\varpi ^{(1)})$ satisfy $C_*(\varpi ^{(1)})\circ C_*(\iota ^{(1)})=id$ since $\varpi ^{(1)}\circ \iota ^{(1)}=id$. 
By the same reason, the cochain maps $C^*(\iota ^{(1)})$ and $C^*(\varpi ^{(1)})$ satisfy $C^*(\iota ^{(1)})\circ C^*(\varpi ^{(1)})=id$. 
\subsection{Decomposition of (co)homology}
Let $S$ be a subset of $H$ with $S^{(1)}\subset S$. 
For $p>0$ and $z\in H$, denote by $C_p(\mbQ [S])_{(z)}$ a subspace of $C_p(\mbQ [S])$ generated by the set $\{ [u_1]\wedge \cdots \wedge [u_p]\mid u_1,\ldots ,u_p\in S,u_1+\cdots +u_p=z\}$. 
we define a subspace $C^p(\mbQ S)_{(z)}$ of $C^p(\mbQ S)$ by 
\[
C^p(\mbQ [S])_{(z)}:=\left\{ \omega \in C^p(\mbQ S) \mid 
\begin{matrix}
u_1,\cdots ,u_p\in S, u_1+\cdots +u_p \neq z \\
\Longrightarrow \omega ([u_1],\cdots ,[u_p])=0 
\end{matrix}
\right\} . 
\]
The subspace $C_*(\mbQ [S])_{(z)}$ is a subcomplex of the chain complex $C_*(\mbQ [S])$ and 
the subspace $C^*(\mbQ [S])_{(z)}$ is a subcomplex of the cochain complex $C^*(\mbQ S )$, that is, 
we have $\partial (C_p(\mbQ [S])_{(z)})\subset C_{p-1}(\mbQ [S])_{(z)}$ and $d(C^p(\mbQ S)_{(z)})\subset C^{p+1}(\mbQ S)_{(z)}$ for all $p>0$ because $u_1+\cdots +u_p$ define the degree of $\wedge \mbQ [S]$. 
We denote
$Z_p(\mbQ [S])_{(z)}:=Z_p(C_*(\mbQ [S])_{(z)})$, 
$B_p(\mbQ [S])_{(z)}:=B_p(C_*(\mbQ [S])_{(z)})$, 
$H_p(\mbQ [S])_{(z)}:=H_p(C_*(\mbQ [S])_{(z)})$, 
$Z^p(\mbQ [S])_{(z)}:=Z^p(C^*(\mbQ [S])_{(z)})$, 
$B^p(\mbQ [S])_{(z)}:=B^p(C^*(\mbQ [S])_{(z)})$ and
$H^p(\mbQ [S])_{(z)}:=H^p(C^*(\mbQ [S])_{(z)})$. 
Then we have 
$Z_p(\mbQ [S])\cong \prod _{z\in H}Z_p(\mbQ [S])_{(z)}$, 
$B_p(\mbQ [S])\cong \prod _{z\in H}B_p(\mbQ [S])_{(z)}$, 
$H_p(\mbQ [S])\cong \prod _{z\in H}H_p(\mbQ [S])_{(z)}$, 
$Z^p(\mbQ [S])\cong \prod _{z\in H}Z^p(\mbQ [S])_{(z)}$, 
$B^p(\mbQ [S])\cong \prod _{z\in H}B^p(\mbQ [S])_{(z)}$ and 
$H^p(\mbQ [S])\cong \prod _{z\in H}H^p(\mbQ [S])_{(z)}$. 

We call $H_p(\mbQ [S])_{(z)}$ or $H_p(\mbQ [S])_{(z)}$ an {\it inner component} if $z\in {\rm ker}\mu$, and an {\it outer component} if $z\in H^{(1)}$. 

    \section{Inner component}\label{sec:inner}
        Fix an element $z\in {\rm ker}\mu$ in this section. 
        Denote by $\hat{I}$ the $\mbQ$-algebra ideal in $\wedge \mbQ [H]$ generated by the set $\{ [u+v]\wedge [x]-[u]\wedge [x+v]-[v]\wedge [x+u]\mid u,v,x\in H\}$. 
        The generator system of $\hat{I}$  consists of homogeneous elements about degree $p\in \{ 0,1,2,\ldots \}$. 
        Then we have $\hat{I}=\bigoplus _{p=0}^{\infty}\hat{I}\cap C_p(\mbQ [H])$. 
        The generator system of $\hat{I}$ consists of homogeneous elements about degree $x\in H$. 
        Then we have $\hat{I}\cap C_p(\mbQ [H])=\bigoplus _{x\in H}\hat{I}\cap C_p(\mbQ [H])_{(x)}$. 
        Set $\hat{I}_{p,(x)}=\hat{I}\cap C_p(\mbQ [H])_{(x)}$ and $\hat{C}_p(\mbQ [H])_{(x)}=C_p(\mbQ [H])/\hat{I}_{p,(x)}$. 
        
        Dually, we define the subspace $\hat{C}^p(\mbQ [H])_{(z)}$ of $C^p(\mbQ [H])_{(z)}$ by the following condition; 
        $\omega \in \hat{C}^p(\mbQ [H])_{(z)}$ if and only if the map $H^p=H\times \cdots \times H\ni (u_1,\ldots ,u_{p-1})\mapsto \omega ([u_1], \ldots ,[u_{p-1}],[z-u_1-\cdots -u_{p-1}]) \in \mbQ$ is a $\mbZ$-$(p-1)$-linear map. 
        \begin{Prop}\label{comp zero}
            The composition $(\mbox{the quotient projection})\circ \partial _p :C_p(\mbQ [H])_{(z)}\to C_{p-1}(\mbQ [H])_{(z)}\to \hat{C}_{p-1}(\mbQ [H])_{(z)}$ is a zero map. 
        \end{Prop}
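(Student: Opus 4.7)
Plan: I will work dually. By construction, $\hat{C}^{p-1}(\mbQ [H])_{(z)}$ is the annihilator of $\hat{I}_{p-1,(z)}$ in $C^{p-1}(\mbQ [H])_{(z)}$: the multilinearity condition defining $\hat{C}^{p-1}$ is equivalent to $\omega$ vanishing on every generator $([u+v]\wedge [x]-[u]\wedge [v+x]-[v]\wedge [u+x])\wedge \alpha$ of $\hat{I}_{p-1,(z)}$. Over $\mbQ$ the double annihilator returns the original subspace, so it suffices to show $(d\omega )(\xi )=0$ for every $\omega \in \hat{C}^{p-1}(\mbQ [H])_{(z)}$ and every $\xi =[u_1]\wedge \cdots \wedge [u_p]$ with $u_1+\cdots +u_p=z$. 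To each such $\omega $ I attach its trace $f:H^{p-2}\to \mbQ$, $f(v_1,\ldots ,v_{p-2})=\omega ([v_1],\ldots ,[v_{p-2}],[z-v_1-\cdots -v_{p-2}])$: by hypothesis $f$ is $\mbZ$-multilinear, alternation of $\omega$ makes $f$ alternating, and alternation also forces $f$ to vanish whenever $z$ occupies a slot (the prototype $p=2$: $f(z)=\omega ([z],[0])=-\omega ([0],[z])=-f(0)=0$).

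Introduce the $p$-dimensional space $V=\mbQ e_1\oplus \cdots \oplus \mbQ e_p$ and the exterior forms $A\in \wedge ^2V^*$, $A(e_i,e_j)=\langle u_i,u_j\rangle$, and $F\in \wedge ^{p-2}V^*$, $F(e_{i_1},\ldots ,e_{i_{p-2}})=f(u_{i_1},\ldots ,u_{i_{p-2}})$. Expanding by Chevalley--Eilenberg and then cyclically moving $[u_i+u_j]$ to the last wedge-slot (so that it is absorbed into the implicit coordinate of $f$), one obtains
\[
(d\omega )(\xi )=(-1)^{p-2}\sum _{i<j}(-1)^{i+j-1}\langle u_i,u_j\rangle \,f(u_{k_1},\ldots ,u_{k_{p-2}}),
\]
where $\{k_1<\cdots <k_{p-2}\}=\{1,\ldots ,p\}\setminus \{i,j\}$. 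A direct inversion count ($2(i-1)+(j-i-1)=i+j-3$) shows that $(-1)^{i+j-1}$ is precisely the sign of the $(2,p-2)$-shuffle bringing $(i,j)$ to the front, so the right-hand side equals $(-1)^{p-2}(A\wedge F)(e_1,\ldots ,e_p)$.

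Set $s:=e_1+\cdots +e_p\in V$. Since $z\in \ker \mu$, we have $A(s,-)=\sum _i\langle u_i,-\rangle =\langle z,-\rangle =0$, and since $f$ vanishes whenever $z$ occupies a slot, $F(s,\ldots )=\sum _if(u_i,\ldots )=f(z,\ldots )=0$. Thus both $A$ and $F$ descend to the quotient $V/\mbQ s$, which has dimension $p-1$; consequently $A\wedge F$, being a $p$-form on a $(p-1)$-dimensional space, vanishes identically. Hence $(d\omega )(\xi )=0$, which proves the proposition.

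The main obstacle is the sign identification in the second paragraph: combining the Chevalley--Eilenberg sign, the sign incurred by cyclically shifting one wedge slot to the end, and the shuffle sign in $A\wedge F$ requires a careful but elementary bookkeeping; once that is in hand, the descent through $V/\mbQ s$ gives the vanishing for free from pure linear algebra.
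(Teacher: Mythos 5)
Your argument is correct, but it is genuinely different from the paper's. The paper stays at the chain level: it expands $\partial _p([u_1]\wedge \cdots \wedge [u_p])$, rewrites each term $[u_i+u_j]\wedge \cdots$ using the defining relation of $\hat{I}$ with $[u_p]$ as the absorbing slot, substitutes $\langle u_k,u_p\rangle =\langle u_k,z-\sum _{\ell}u_{\ell}\rangle$ (here $z\in \ker \mu$ enters), and cancels the resulting four sums term by term. You instead dualize: you identify $\hat{C}^{p-1}(\mbQ [H])_{(z)}$ with the annihilator of $\hat{I}_{p-1,(z)}$ (correct -- vanishing on the generators is additivity of the trace $f$ in its first slot, which together with alternation gives $\mbZ$-multilinearity), invoke the double-annihilator identity over a field, and then explain the cancellation conceptually: $\omega (\partial _p\xi )=\pm (A\wedge F)(e_1,\ldots ,e_p)$, and both $A$ and $F$ descend to the $(p-1)$-dimensional quotient $V/\mbQ s$ because $z\in \ker \mu$ kills $A(s,-)$ and multilinearity plus alternation kill $F(s,\ldots )$, so the $p$-form $A\wedge F$ vanishes. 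Your route buys a reason for the cancellation rather than a verification of it, and it meshes well with the dual description $\hat{C}_p(\mbQ [H])_{(z)}\cong \mbQ \otimes (\wedge ^{p-1}H/\mbZ z)$ that the paper only establishes afterwards in Proposition \ref{QotimesH/Zz}; the cost is the extra annihilator machinery, which the paper's direct computation avoids. Two small blemishes that do not affect correctness: your ``prototype $p=2$'' is really the case $p=3$ (for $p=2$ the trace has no arguments, and $\partial _2$ already vanishes on $C_2(\mbQ [H])_{(z)}$), and the vanishing of $f$ when $z$ occupies a slot needs multilinearity together with alternation, not alternation alone.
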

        \begin{proof}
            For $u_1,\ldots ,u_p\in H$ with $u_1+\cdots +u_p=z$, we have 
            \begin{eqnarray*}
                &&\partial _p([u_1]\wedge \cdots \wedge [u_p]) \\
                &=& \sum _{1\leq i<j\leq p-1}(-1)^{i+j}\langle u_i,u_j\rangle [u_i+u_j]\wedge [u_1]\wedge \cdots \wedge \hat{[u_i]}\wedge \cdots \wedge \hat{[u_j]}\wedge \cdots \wedge [u_p] \\
                &&+\sum _{k=1}^{p-1}(-1)^{i+p}\langle u_k,u_p\rangle [u_k+u_p]\wedge [u_1]\wedge \cdots \wedge \hat{[u_k]}\wedge \cdots \wedge [u_{p-1}] \\
                &\equiv & \sum _{1\leq i<j\leq p-1}(-1)^{i+j}\langle u_i,u_j\rangle \\
                &&\qquad \big( [u_i]\wedge [u_1]\wedge \cdots \wedge \hat{[u_i]}\wedge \cdots \wedge \hat{[u_j]}\wedge \cdots \wedge [u_{p-1}]\wedge [u_p+u_j] \\
                &&\qquad +[u_j]\wedge [u_1]\wedge \cdots \wedge \hat{[u_i]}\wedge \cdots \wedge \hat{[u_j]}\wedge \cdots \wedge [u_{p-1}]\wedge [u_p+u_i] \big) \\
                &&+\sum _{k=1}^{p-1}(-1)^{i+p}\langle u_k,z-\sum _{\ell =1}^{p-1}u_{\ell}\rangle [u_k+u_p]\wedge [u_1]\wedge \cdots \wedge \hat{[u_k]}\wedge \cdots \wedge [u_{p-1}] \\
                &=& \sum _{1\leq i<j\leq p-1}(-1)^{j-1}\langle u_i,u_j\rangle [u_1]\wedge \cdots \wedge \hat{[u_j]}\wedge \cdots \wedge [u_{p-1}]\wedge [u_p+u_j] \\
                &&+\sum _{1\leq i<j\leq p-1}(-1)^{i}\langle u_i,u_j\rangle [u_1]\wedge \cdots \wedge \hat{[u_i]}\wedge \cdots \wedge [u_{p-1}]\wedge [u_p+u_i] \\
                &&+\sum _{1\leq k<\ell \leq p-1}(-1)^{k-1}\langle u_k,u_{\ell} \rangle [u_1]\wedge \cdots \wedge \hat{[u_k]}\wedge \cdots \wedge [u_{p-1}]\wedge [u_p+u_k] \\
                &&+\sum _{1\leq \ell <k \leq p-1}(-1)^{k}\langle u_{\ell},u_k \rangle [u_1]\wedge \cdots \wedge \hat{[u_k]}\wedge \cdots \wedge [u_{p-1}]\wedge [u_p+u_k] \\
                &=&0 \qquad \mbox{mod }\hat{I}_{p,(z)}. 
            \end{eqnarray*}
            This completes the proof of the proposition. 
        \end{proof}
        By Proposition \ref{comp zero}, we can define a $\mbQ$-linear map $H_p(\mbQ [H])_{(z)}\to \hat{C}_p(\mbQ [H])_{(z)}$ and $\hat{C}^p(\mbQ [H])_{(z)}\to H^p(\mbQ [H])_{(z)}$. 
        \begin{Prop} \label{QotimesH/Zz}
            There is a natural $\mbQ$-linear isomorphism 
            \[ \hat{C}_p(\mbQ [H])_{(z)}\longleftarrow \mbQ \otimes _{\mbZ} (\wedge ^{p-1}_{\mbZ}H/\mbZ z). \]
        \end{Prop}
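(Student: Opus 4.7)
The plan is to construct
\[
\phi\colon \mbQ \otimes_{\mbZ} (\wedge^{p-1}_{\mbZ}(H/\mbZ z)) \to \hat{C}_p(\mbQ[H])_{(z)}
\]
by $\phi(1\otimes(\bar{u}_1\wedge\cdots\wedge\bar{u}_{p-1})) = [u_1]\wedge\cdots\wedge[u_{p-1}]\wedge[z-u_1-\cdots-u_{p-1}] \bmod \hat{I}$ (where $\bar{u}$ is the class of $u$ in $H/\mbZ z$), to verify that it is well-defined via the universal property of the exterior algebra, and then to exhibit an inverse $\bar{\psi}$.

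For well-definedness of the lift $\tilde{\phi}\colon H^{p-1}\to \hat{C}_p(\mbQ[H])_{(z)}$, alternating is immediate from the $\wedge$-structure, and $\mbZ$-multilinearity in the $i$-th argument follows by applying the generating relation $[u+v]\wedge[x]\equiv [u]\wedge[x+v]+[v]\wedge[x+u]\pmod{\hat{I}}$ to the $i$-th and $p$-th wedge factors while keeping the remaining $[u_j]$'s fixed. The vanishing of $\tilde{\phi}$ when some argument lies in $\mbZ z$ is the delicate point: by multilinearity it reduces to $\tilde{\phi}(u_1,\ldots,z,\ldots,u_{p-1})=0$, and by alternating to showing that $[z]\wedge[u_2]\wedge\cdots\wedge[u_{p-1}]\wedge[-s]$ lies in $\hat{I}$, where $s=u_2+\cdots+u_{p-1}$. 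The defining relation with $u=v=0$ gives $[0]\wedge[x]\in\hat{I}$ for every $x\in H$. Then the generating relation with $a=z-u_2$, $b=u_2$, $c=-s$ yields
\[
[z]\wedge[-s]\equiv [z-u_2]\wedge[u_2-s]+[u_2]\wedge[z-u_2-s]\pmod{\hat{I}};
\]
wedging with $[u_2]\wedge\cdots\wedge[u_{p-1}]$ kills the second summand by repetition of $[u_2]$. Iterating the same trick with $u_3,\ldots,u_{p-1}$ telescopes the expression down to $[z-s]\wedge[u_2]\wedge\cdots\wedge[u_{p-1}]\wedge[0]$, which lies in $\hat{I}$.

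For the inverse, define $\psi\colon C_p(\mbQ[H])_{(z)}\to \mbQ\otimes\wedge^{p-1}(H/\mbZ z)$ by $\psi([u_1]\wedge\cdots\wedge[u_p])=1\otimes(\bar{u}_1\wedge\cdots\wedge\bar{u}_{p-1})$ whenever $\sum u_i=z$. Because $\bar{u}_p=-\sum_{i<p}\bar{u}_i$ in $H/\mbZ z$, a short calculation shows that forming the wedge by dropping any other $\bar{u}_j$ instead produces $(-1)^{p-j}$ times the displayed value (the other substitution terms vanish because some $\bar{u}_i$ repeats); hence $\psi$ transforms by $\mathrm{sgn}(\sigma)$ under permutations of the $p$ arguments and descends to a map on $\wedge^p\mbQ[H]$. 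Plugging a generator of $\hat{I}_{p,(z)}$ into $\psi$ produces, after factoring out the wedge of the remaining $[w_k]$'s, the expression $(\bar{u}+\bar{v})\wedge\bar{x}-\bar{u}\wedge(\bar{x}+\bar{v})-\bar{v}\wedge(\bar{x}+\bar{u})$, which vanishes in $\wedge(H/\mbZ z)$; so $\psi$ descends to $\bar{\psi}\colon \hat{C}_p(\mbQ[H])_{(z)}\to \mbQ\otimes\wedge^{p-1}(H/\mbZ z)$. Evaluating on generators then shows $\bar{\psi}\circ\phi=\mathrm{id}$ and $\phi\circ\bar{\psi}=\mathrm{id}$.

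The main obstacle will be the iterative reduction showing $[z]\wedge[u_2]\wedge\cdots\wedge[u_{p-1}]\wedge[-s]\in\hat{I}$; this is precisely what ensures that $\phi$ factors through the quotient $H/\mbZ z$ rather than merely through $H$, and once it is in hand the remaining manipulations with $\phi$ and $\bar{\psi}$ are routine.
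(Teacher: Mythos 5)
Your proposal is correct and follows essentially the same route as the paper: the same map $1\otimes(u_1\wedge\cdots\wedge u_{p-1})\mapsto [u_1]\wedge\cdots\wedge[u_{p-1}]\wedge[z-u_1-\cdots-u_{p-1}]$, the same ``drop the last factor'' inverse, and the same well-definedness checks (alternating, $\mbZ$-multilinearity via the generators of $\hat{I}$, vanishing on $\mbZ z$, and annihilation of $\hat{I}_{p,(z)}$ by the inverse). Your telescoping argument that $[z]\wedge[u_2]\wedge\cdots\wedge[u_{p-1}]\wedge[-s]$ lies in $\hat{I}$ is simply a more careful write-up of the step the paper disposes of in one terse line in its computation of $g(c,z,u_2,\ldots,u_{p-1})=0$.
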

        \begin{proof}
        Define a map $f: \{ (u_1,\cdots ,u_p)\mid u_i\in H, u_1+\cdots +u_p=z\} \to \mbQ \otimes (\wedge ^{p-1}H/\mbZ z)$ by $f(u_1,\cdots ,u_p)=1\otimes (u_1\wedge \cdots \wedge u_{p-1})$. 
        We can consider $\mbQ$-$p$-linear extension $f([u_1],\cdots ,[u_p])=f(u_1,\cdots ,u_p)$. 
        We denote by $S_p$ the symmetric group of degree $p$. 
        For $\sigma \in S_p$ with $\sigma (p)=p$, we have $f(u_{\sigma (1)},\cdots ,u_{\sigma (p)})=({\rm sgn}\sigma )f(u_1,\cdots ,u_p)$ easily. 
        For $u_1,\ldots ,u_p\in H$ with $u_1+\cdots u_p=z$, we have 
        \begin{eqnarray*}
            f(u_p,u_2,\ldots ,u_{p-1},u_1)
            &=&1\otimes (u_p\wedge u_2\wedge \cdots \wedge u_{p-1}) \\
            &=& 1\otimes (z\wedge u_2\wedge \cdots \wedge u_{p-1}) -\sum _{i=1}^{p-1}1\otimes (u_i\wedge \cdots \wedge u_{p-1}) \\
            &=& -1\otimes (u_1\wedge \cdots \wedge u_{p-1}) \\
            &=& -f(u_1,\cdots ,u_p). 
        \end{eqnarray*}
        Hence we have $f(u_{\sigma (1)},\ldots ,u_{\sigma (p)})=({\rm sgn}\sigma )f(u_1,\ldots ,u_p)$ for $\sigma \in S_p$. 
        This induces the $\mbQ$-linear map $f:C_p(\mbQ [H])_{(z)}\to \mbQ \otimes (\wedge ^{p-1}H/\mbZ z)$ with $f([u_1]\wedge \cdots \wedge [u_p])=1\otimes (u_1\wedge \cdots \wedge u_{p-1})$. 
        For $u_1,v_1,u_2,\ldots ,u_p\in H$ with $u_1+v_1+u_2+\cdots +u_p=z$, we have 
        \begin{eqnarray*}
            &&f(u_1+v_1,u_2,\ldots ,u_p) \\
            &=& 1\otimes ((u_1+v_1)\wedge u_2\wedge \cdots \wedge u_{p-1}) \\
            &=& 1\otimes (u_1\wedge u_2\wedge \cdots \wedge u_{p-1})+1\otimes (v_1\wedge u_2\wedge \cdots \wedge u_{p-1}) \\
            &=& f(u_1,u_2,\ldots ,u_{p-1},u_p+v_1)+ f(v_1,u_2,\ldots ,u_{p-1},u_p+u_1). 
        \end{eqnarray*}
        This mean $f(I_{p,(z)})=0$ and $f$ induce $f:\hat{C}_p(\mbQ [H])_{(z)}\to \mbQ \otimes (\wedge ^{p-1}H/\mbZ z)$ with $f([u_1]\wedge \cdots \wedge [u_p])=1\otimes (u_1\wedge \cdots \wedge u_{p-1})$. 
        
        We construct the inverse map. 
        Define the map $g:\mbQ \times H\times \cdots \times H\to \hat{C}_p(\mbQ [H])_{(z)}$ by $g(c,u_1,\cdots ,u_{p-1})=c[u_1]\wedge \cdots \wedge [u_{p-1}]\wedge [z-u_1-\cdots -u_{p-1}]$. 
        It is trivial that $g$ satisfies $g(a+b,u_1,\ldots ,u_{p-1})=g(a,u_1,\ldots ,u_{p-1})+g(b,u_1,\ldots ,u_{p-1})$ and $g(c,u_{\sigma (1)},\ldots ,u_{\sigma (p-1)})=({\rm sgn}\sigma ) g(c,u_1,\ldots ,u_{p-1})$ for $\sigma \in S_{p-1}$. 
        Moreover the map $g$ is also $\mbZ$-$p$-linear. 
        In fact, we have 
        \begin{eqnarray*}
            &&g(c,u_1+v_1,u_2,\ldots ,u_{p-1}) \\
            &=& c[u_1+v_1]\wedge \cdots \wedge [u_{p-1}]\wedge [z-u_1-v_1-u_2-\cdots -u_{p-1}] \\
            &\equiv & c[u_1]\wedge \cdots \wedge [u_{p-1}]\wedge [z-u_1-\cdots -u_{p-1}] \\
            &&\quad +c[v_1]\wedge \cdots \wedge [u_{p-1}]\wedge [z-v_1-u_2-\cdots -u_{p-1}] \\
            &=& g(c,u_1,\ldots ,u_{p-1})+g(c,v_1,u_2,\ldots ,u_{p-1}). 
        \end{eqnarray*}
        Moreover we have 
        \begin{eqnarray*}
            g(c,z,u_2,\ldots ,u_{p-1})
            &=& c[z]\wedge [u_2]\wedge \cdots \wedge [u_{p-1}]\wedge [z-z-u_2-\cdots -u_{p-1}] \\
            &=& -\sum _{i=2}^{p-1} c[z+u_i]\wedge [u_2]\wedge \cdots \wedge [u_{p-1}]\wedge [u_i] \\
            &=&0. 
        \end{eqnarray*}
        Hence the map $g$ induces the $\mbQ$-linear $g:\mbQ \otimes (\wedge ^{p-1}H/\mbZ z)\to \hat{C}_p(\mbQ [H])_{(z)}$ with $g(c\otimes (u_1\wedge \cdots \wedge u_{p-1}))=c[u_1]\wedge \cdots \wedge [u_{p-1}]\wedge [z-u_1-\cdots -u_{p-1}]$. 
        
        Clearly $g$ is the inverse of $f$. 
        \end{proof}
        Dually, we have an natural isomorphism $C^p(\mbQ [H])_{(z)}\leftarrow {\rm Hom}_{\mbZ}(\wedge ^2H/\mbZ z,\mbQ )$. 
        For the rest of this subsection, we confine ourselves to the only second homology group. 
        We have $C_2(\mbQ [H^{(1)}])_{(z)}=Z_2(\mbQ [H^{(1)}])_{(z)}$ and $C_2(\mbQ [H])_{(z)}=Z_2(\mbQ [H])_{(z)}$. 
        In fact, $\partial _2([u]\wedge [z-u])=-\langle u,z-u\rangle [z]=0$ for $u\in H^{(1)}$ or $u\in H$. 
        Set the inclusion $\iota ^{(1)}:\mbQ [H^{(1)}]\to \mbQ [H]$. 
        \begin{Prop}\label{inj}
            The kernel $C_2(\iota ^{(1)})^{-1}(I_{2,(z)})$ of the composition $Z_2(\mbQ [H^{(1)}])_{(z)} \to Z_2(\mbQ [H])_{(z)}\to \hat{C}_2(\mbQ [H])_{(z)}$ equals $B_2(\mbQ [H^{(1)}])_{(z)}$. 
        \end{Prop}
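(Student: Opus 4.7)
The containment $B_2(\mbQ[H^{(1)}])_{(z)}\subseteq C_2(\iota^{(1)})^{-1}(\hat I_{2,(z)})$ is immediate from Proposition~\ref{comp zero}: any $3$-chain in $C_3(\mbQ[H^{(1)}])_{(z)}$ lies in $C_3(\mbQ[H])_{(z)}$, so its boundary maps to zero in $\hat C_2(\mbQ[H])_{(z)}$. The content is the reverse inclusion.

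Using the isomorphism $\hat C_2(\mbQ[H])_{(z)}\cong\mbQ\otimes_\mbZ(H/\mbZ z)$ from Proposition~\ref{QotimesH/Zz}, the map in question becomes $f:Z_2(\mbQ[H^{(1)}])_{(z)}\to\mbQ\otimes(H/\mbZ z)$, $[u]\wedge[z-u]\mapsto 1\otimes\bar u$. My plan is to build a $\mbQ$-linear map $g:\mbQ\otimes(H/\mbZ z)\to Z_2(\mbQ[H^{(1)}])_{(z)}/B_2(\mbQ[H^{(1)}])_{(z)}$ such that $g\circ\bar f=\mathrm{id}$, where $\bar f$ is the map induced on $Z_2/B_2$. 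This splitting will force $\bar f$ to be injective, which is the desired inclusion $\ker f\subseteq B_2$.

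The heart of the construction is an \emph{additivity lemma}: for $u,v\in H^{(1)}$ with $u+v\in H^{(1)}$,
\[
[u+v]\wedge[z-u-v]\equiv[u]\wedge[z-u]+[v]\wedge[z-v]\pmod{B_2}.
\]
When $\langle u,v\rangle\ne 0$ this is immediate from the computation of $\partial_3([u]\wedge[v]\wedge[z-u-v])$ which, using $z\in\ker\mu$ to simplify $\langle u,z-u-v\rangle=-\langle u,v\rangle$ and $\langle v,z-u-v\rangle=\langle u,v\rangle$, comes out to $-\langle u,v\rangle$ times the difference of the two sides. When $\langle u,v\rangle=0$ I would choose an auxiliary $a\in H$ with $\langle u,a\rangle$, $\langle v,a\rangle$ and $\langle u+v,a\rangle$ all nonzero, available because $\mu(u),\mu(v),\mu(u+v)$ are three nonzero functionals on the $\mbQ$-vector space $H\otimes\mbQ$ and such a vector space is never the union of three proper subspaces. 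Writing $u+v=(u+a)+(v-a)$ one has $\langle u+a,v-a\rangle=-\langle u+v,a\rangle\ne 0$, so the nondegenerate case applies to $(u+a,v-a)$; two further applications to $(u,a)$ and $(v-a,a)$ (both with nonzero brackets) then cancel the $[a]\wedge[z-a]$ contributions.

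A parallel argument, choosing $w\in H^{(1)}$ with $\langle u,w\rangle\ne 0$ and decomposing $u+z=(u+w)+(z-w)$, yields translation invariance $[u]\wedge[z-u]\equiv[u+kz]\wedge[z-u-kz]\pmod{B_2}$ for all $k\in\mbZ$. With these two lemmas in hand, $g$ can be defined on generators by $1\otimes\bar y\mapsto[y]\wedge[z-y]$ when $y\in H^{(1)}$, and by $1\otimes\bar y\mapsto[y+a]\wedge[z-y-a]-[a]\wedge[z-a]$ when $y\in\ker\mu$, using any $a\in H^{(1)}$; additivity makes the choice of $a$ irrelevant and translation invariance makes $g$ descend from $H$ to $H/\mbZ z$. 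The identity $g\circ\bar f=\mathrm{id}$ is then read off directly from the formula for $g$ on $H^{(1)}$-elements. The main obstacle is the degenerate case $\langle u,v\rangle=0$ of the additivity lemma: one must arrange three simultaneous nonvanishing conditions on the auxiliary element $a$, and this is precisely where the hypothesis $\langle-,-\rangle\ne 0$, combined with the membership $u+v\in H^{(1)}$, is genuinely used.
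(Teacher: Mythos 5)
Your argument is correct and its computational heart coincides with the paper's: the relation $[u+v]\wedge[z-u-v]\equiv[u]\wedge[z-u]+[v]\wedge[z-v] \bmod B_2(\mbQ [H^{(1)}])_{(z)}$, obtained from $\partial _3([u]\wedge [v]\wedge [z-u-v])$ when $\langle u,v\rangle \neq 0$ and from an auxiliary element $a$ with $\langle u,a\rangle ,\langle v,a\rangle ,\langle u+v,a\rangle$ all nonzero when $\langle u,v\rangle =0$, is exactly the paper's key computation (the paper uses the decompositions through $u+v+x$, $v+x$ rather than your $u+a$, $v-a$, but this is cosmetic; your justification for the existence of the auxiliary element, via the fact that a $\mbQ$-vector space is not a union of finitely many proper subspaces, is actually more explicit than the paper's bare assertion). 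Where you genuinely diverge is in how the reverse inclusion is extracted from this lemma. The paper asserts that $C_2(\iota ^{(1)})^{-1}(I_{2,(z)})$ is spanned by the relations $[u+v]\wedge[z-u-v]-[u]\wedge[z-u]-[v]\wedge[z-v]$ with $u,v,u+v\in H^{(1)}$ and kills each generator individually; this is short but leans on an unproved description of the intersection $C_2(\mbQ [H^{(1)}])_{(z)}\cap \hat{I}_{2,(z)}$. Your splitting $g$ with $g\circ \bar{f}=\mathrm{id}$ sidesteps that identification entirely, which is a real advantage. The price is that $g$ must be shown to be a well-defined additive map on all of $H/\mbZ z$, not just on $H^{(1)}$: you must check independence of the auxiliary $a$ in the definition of $g(1\otimes \bar{y})$ for $y\in \ker \mu$, and additivity in the mixed cases (one or both summands in $\ker \mu$, or two elements of $H^{(1)}$ whose sum lies in $\ker \mu$), none of which is covered by the additivity lemma as stated since its hypothesis requires $u,v,u+v\in H^{(1)}$. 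These checks all reduce to two or three further applications of the same lemma (essentially the content of the paper's Lemma \ref{linear extension}, which the paper extracts from this very proof), so the gap is routine rather than fatal, but as written "additivity makes the choice of $a$ irrelevant" is doing more work than it admits. Your verification of translation invariance under $z$ and the easy inclusion $B_2\subseteq \ker f$ via Proposition \ref{comp zero} are both correct and cleaner than the paper's recomputation.
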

        \begin{proof}
            For $u,v\in H^{(1)}$ with $u+v\in H^{(1)}$, we have 
            \begin{eqnarray*}
                &&C_2(\iota ^{(1)}) \circ \partial _3([u]\wedge [v]\wedge [z-u-v]) \\
                &=& -\langle u,v\rangle \left( [u+v]\wedge [z-u-v]-[u]\wedge [z-u]-[v]\wedge [z-v]\right) \\
                &\equiv & 0 . 
            \end{eqnarray*}
            Hence we obtain $C_2(\iota ^{(1)})^{-1}(I_{2,(z)}) \supset B_2(\mbQ [H^{(1)}])_{(z)}$. 
            By similar calculation, we have $I_{2,(z)}\supset B_2(\mbQ [H])_{(z)}$. 
            
            $C_2(\iota ^{(1)})^{-1}(I_{2,(z)})$ is generated by the set $\{ [u+v]\wedge [z-u-v]-[u]\wedge [z-u]-[v]\wedge [z-v] \mid u,v,u+v\in H^{(1)}\}$. 
            If $\langle u,v\rangle \neq 0$, then we have $B_2(\mbQ [H^{(1)}])_{(z)}\ni \partial _3(\frac{-1}{\langle u,v\rangle}[u]\wedge [v]\wedge [z-u-v])=[u+v]\wedge [z-u-v]-[u]\wedge [z-u]-[v]\wedge [z-v]$. 
            If $u,v,u+v\in H^{(1)}$ and $\langle u,v\rangle =0$, we can take $x\in H^{(1)}$ with $\langle u,x\rangle \neq 0$, $\langle v,x\rangle \neq 0$ and $\langle u+v,x\rangle \neq 0$. 
            Then we obtain $[u+v]\wedge [z-u-v]-[u]\wedge [z-u]-[v]\wedge [z-v] \in B_2(\mbQ [H^{(1)}])_{(z)}$ since $-[u+v+x]\wedge [z-u-v-x]+[u+v]\wedge [z-u-v]+[x]\wedge [z-x] \in B_2(\mbQ [H^{(1)}])_{(z)}$, $[u+v+x]\wedge [z-u-v-x]-[u]\wedge [z-u]+[x+v]\wedge [z-x-v] \in B_2(\mbQ [H^{(1)}])_{(z)}$ and $[v+x]\wedge [z-v-x]-[v]\wedge [z-v]+[x]\wedge [z-x] \in B_2(\mbQ [H^{(1)}])_{(z)}$. 
            Hence we have $C_2(\iota ^{(1)})^{-1}(I_{2,(z)}) \subset B_2(\mbQ [H^{(1)}])_{(z)}$. 
            This completes the proof of the proposition. 
        \end{proof}
        By Proposition \ref{inj}, the composition $H_2(\mbQ [H^{(1)}])_{(z)}\to H_2(\mbQ [H])_{(z)} \to \hat{C}_2(\mbQ [H])_{(z)} \to \mbQ \otimes H/\mbZ z$ is injective. 
        \begin{Prop} \label{surj}
            The composition $Z_2(\mbQ [H^{(1)}])_{(z)}\to Z_2(\mbQ [H])_{(z)}\to \hat{C}_1(\mbQ [H])_{(z)}\to \mbQ \otimes H/\mbZ z$ is surjective if $\langle -,-\rangle \neq 0$. 
        \end{Prop}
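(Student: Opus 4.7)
The plan is to read the map concretely using the isomorphism established in Proposition \ref{QotimesH/Zz}: a cycle of the form $[u]\wedge [z-u]$ (which automatically lies in $Z_2(\mbQ [H])_{(z)}$ since $\partial _2$ vanishes on it) is sent to $1\otimes u\in \mbQ \otimes H/\mbZ z$. Thus the proof reduces to exhibiting, for every $u\in H$, a cycle in $Z_2(\mbQ [H^{(1)}])_{(z)}$ whose image is $1\otimes u$.

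First I would handle the easy case. If $u\in H^{(1)}$, then since $z\in \ker \mu$, we have $\langle z-u,-\rangle =-\langle u,-\rangle \neq 0$, so $z-u\in H^{(1)}$ as well. Hence $[u]\wedge [z-u]\in Z_2(\mbQ [H^{(1)}])_{(z)}$ and maps to $1\otimes u$. So every $1\otimes u$ with $u\in H^{(1)}$ is in the image.

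The only obstruction is when $u\in \ker \mu$; then $[u]\wedge [z-u]$ does not lie in $C_2(\mbQ [H^{(1)}])_{(z)}$ because $u\notin H^{(1)}$. Here I would exploit the hypothesis $\langle -,-\rangle \neq 0$: pick any $v\in H^{(1)}$. Because $u\in \ker \mu$, we have $\langle u+v,-\rangle =\langle v,-\rangle \neq 0$, so $u+v\in H^{(1)}$ (and, as in the previous paragraph, so are $z-u-v$ and $z-v$). Therefore
\[
[u+v]\wedge [z-u-v]-[v]\wedge [z-v]\in Z_2(\mbQ [H^{(1)}])_{(z)},
\]
and its image is $1\otimes (u+v)-1\otimes v=1\otimes u$ in $\mbQ \otimes H/\mbZ z$.

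Since $\mbQ \otimes H/\mbZ z$ is spanned by the elements $1\otimes u$ for $u\in H$, surjectivity follows. The only nontrivial point is the central case $u\in \ker \mu$, which is precisely where the assumption $\langle -,-\rangle \neq 0$ enters via the existence of an auxiliary $v\in H^{(1)}$; I do not expect any real obstacle beyond verifying that the membership conditions in $H^{(1)}$ are preserved under the additive shifts above.
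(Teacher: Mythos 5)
Your proposal is correct and follows essentially the same route as the paper: both arguments observe that $[u]\wedge[z-u]$ maps to $1\otimes u$ for $u\in H^{(1)}$, and then reach the remaining generators $1\otimes u$ with $u\in\ker\mu$ by shifting with an auxiliary element of $H^{(1)}$ (the paper writes $1\otimes u=1\otimes(u-x_0)+1\otimes x_0$, you write $1\otimes u=1\otimes(u+v)-1\otimes v$; these are the same device). Your version is slightly more explicit about verifying that the shifted elements and their complements relative to $z$ stay in $H^{(1)}$, which the paper leaves implicit.
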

        \begin{proof}
        We have $H^{(1)}\neq \emptyset$ since $\langle -,-\rangle \neq 0$. 
        Take $x_0\in H^{(1)}$. 
        The $\mbQ$-vector space $\mbQ \otimes H/\mbZ z$ is generated by the set $\{ 1\otimes u \mid u\in H\}$. 
        The set $\{ 1\otimes u \mid u\in H^{(1)}\}$  also generates $\mbQ \otimes H/\mbZ z$ since we have $1\otimes u=1\otimes (u-x_0)+1\otimes x_0$. 
        For $u\in H^{(1)}$, $[u]\wedge [z-u]\in Z^2(\mbQ [H^{(1)})_{(z)}$ corresponds to $1\otimes u$ by the composition. 
        \end{proof}
        By Propositions \ref{QotimesH/Zz}, \ref{inj} and \ref{surj}, we obtain a natural isomorphism $H_2(\mbQ [H^{(1)}])_{(z)}\to \mbQ \otimes (H/\mbZ z)$ if $H^{(1)}\neq \emptyset$. 
        We have an isomorphism $H_2(\mbQ [H])_{(z)}\cong H_2(\mbQ [{\rm ker}\mu ])_{(z)}\oplus H_2(\mbQ [H^{(1)}])_{(z)}$ by the decomposition $\mbQ [H]=\mbQ [{\rm ker}\mu ]\oplus \mbQ [H^{(1)}]$. 
            \section{Outer component}
        Fix an element $z\in H^{(1)}$ in this section. 
        The inclusion $\iota ^{(1)}:\mbQ [H^{(1)}]\to \mbQ [H]$ is a section of the projection $\varpi ^{(1)}:\mbQ [H]=\mbQ [{\rm ker}\mu ]\oplus \mbQ [H^{(1)}]\to \mbQ [H^{(1)}]$. 
        Then the homomorphism $H_p(\varpi ^{(1)})_{(z)}: H_p(\mbQ [H])_{(z)}\to H_p(\mbQ [H^{(1)}])_{(z)}$ is surjective. 
        So we have $H_p(\mbQ [H^{(1)}])_{(z)}=0$ if $H_p(\mbQ [H])_{(z)}=0$. 
        \begin{Prop}\label{outer}
            We have $H_2(\mbQ [H])_{(z)}=0$. 
        \end{Prop}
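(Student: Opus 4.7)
The plan is to fix an auxiliary element $y_0 \in H$ with $c := \langle y_0, z\rangle \neq 0$ (which exists since $z \in H^{(1)}$) and to show that in the quotient $M := C_2(\mbQ[H])_{(z)}/B_2(\mbQ[H])_{(z)}$ every basis vector $\beta_u := [u]\wedge[z-u]$ reduces to $\lambda(u)\beta_{y_0}$, where $\lambda(u) := \langle u, z\rangle/c$. Since $\partial_2\beta_u = -\langle u, z\rangle[z]$, a $2$-cycle $\xi = \sum_u c_u\beta_u$ satisfies $\sum_u c_u\langle u, z\rangle = 0$; once the claim holds, $\xi$ reduces in $M$ to $c^{-1}(\sum_u c_u\langle u, z\rangle)\beta_{y_0} = 0$, so $\xi \in B_2$.

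A direct computation shows that for $u_1+u_2+u_3 = z$,
\[
\partial_3([u_1]\wedge[u_2]\wedge[u_3]) = \langle u_2, u_3\rangle\beta_{u_1} - \langle u_1, u_3\rangle\beta_{u_2} + \langle u_1, u_2\rangle\beta_{u_3},
\]
yielding the Pl\"ucker-type relation $\langle u_2, u_3\rangle\beta_{u_1} - \langle u_1, u_3\rangle\beta_{u_2} + \langle u_1, u_2\rangle\beta_{u_3} \equiv 0 \pmod{B_2}$. Setting $\gamma_u := \beta_u - \lambda(u)\beta_{y_0}$, the vanishing of $\langle u_2, u_3\rangle\langle u_1, z\rangle - \langle u_1, u_3\rangle\langle u_2, z\rangle + \langle u_1, u_2\rangle\langle u_3, z\rangle$ whenever $u_1+u_2+u_3 = z$ (a direct expansion) implies the same relation holds among the $\gamma_u$. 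It suffices to show $\gamma_u = 0$ in $M$ for every $u\in H$. For $u \in \ker\mu(y_0)$, applying the relation with $(u, y_0, z-u-y_0)$ and $\langle u, y_0\rangle = 0$ collapses it to $c\gamma_u = 0$; the same relation applied to $v := u - y_0$ handles the case $\langle y_0, u\rangle = c$.

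For the remaining case $d := \langle y_0, u\rangle \notin \{0, c\}$, I plan to choose $v \in \ker\mu(y_0)$ such that $\langle u, v\rangle$, $\langle v, z\rangle$, and $\langle v, z-u\rangle$ are all nonzero. Each of the three $\mbZ$-linear maps $\langle u, -\rangle$, $\langle -, z\rangle$, $\langle -, z-u\rangle$ is nontrivial on $\ker\mu(y_0)$: were any one identically zero, the corresponding element of $\{u, z, z-u\}$ would be proportional to $y_0$ modulo the radical of $\langle -,-\rangle$, and evaluating $\mu(y_0)$ on the proportionality relation would force $d = 0$, $c = 0$, or $c-d = 0$ respectively, each a contradiction. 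Passing to the torsion-free quotient of $\ker\mu(y_0)$, an integer combination of three witnesses avoids all three kernels and provides the desired $v$. Applying the Pl\"ucker relation to $(u, v, z-u-v)$ with $\gamma_v = 0$ yields $\gamma_{u+v} = \alpha\gamma_u$ where $\alpha := 1 + \langle v, z\rangle/\langle u, v\rangle$; iterating (with $u' = u+v$, $v' = v$) gives $\gamma_{u+2v} = \alpha^2\gamma_u$, while applying the relation directly with $2v \in \ker\mu(y_0)$ (noting that $\alpha$ is unchanged under the simultaneous doubling of numerator and denominator) gives $\gamma_{u+2v} = \alpha\gamma_u$. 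Therefore $\alpha(\alpha-1)\gamma_u = 0$, and the conditions on $v$ ensure $\alpha \neq 0, 1$, so $\gamma_u = 0$. The main delicate step is the existence argument for $v$, which is where the assumption $z \in H^{(1)}$ is used essentially; beyond this, the proof is a combinatorial consequence of the Pl\"ucker relation.
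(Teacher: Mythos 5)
Your argument is correct, and it reaches the conclusion by a genuinely different route from the paper. The paper's proof is a one-line-to-state (if not to find) explicit contracting homotopy: it picks $y$ with $\langle y,z\rangle \neq 0$ and writes down maps $\Phi_1:C_1(\mbQ[H])_{(z)}\to C_2(\mbQ[H])_{(z)}$ and $\Phi_2:C_2(\mbQ[H])_{(z)}\to C_3(\mbQ[H])_{(z)}$ satisfying $\Phi_1\circ\partial_2+\partial_3\circ\Phi_2=\mathrm{id}$, so that every cycle $\xi$ equals $\partial_3\Phi_2\xi$ on the nose. You instead work in $C_2(\mbQ[H])_{(z)}/B_2(\mbQ[H])_{(z)}$, use the Pl\"ucker-type relations $\langle u_2,u_3\rangle\beta_{u_1}-\langle u_1,u_3\rangle\beta_{u_2}+\langle u_1,u_2\rangle\beta_{u_3}\equiv 0$ coming from $\partial_3$ to collapse that quotient onto the line spanned by $\beta_{y_0}$, and then kill the surviving coefficient with the cycle condition $\sum_u c_u\langle u,z\rangle=0$; I checked the boundary formula, the identity $\langle u_2,u_3\rangle\langle u_1,z\rangle-\langle u_1,u_3\rangle\langle u_2,z\rangle+\langle u_1,u_2\rangle\langle u_3,z\rangle=0$, and the $\alpha^2=\alpha$ trick, and all are sound (note $\gamma_{z-u}=-\gamma_u$ keeps everything consistent with the relation $\beta_{z-u}=-\beta_u$ among your spanning vectors). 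Your approach trades the burden of guessing a homotopy formula for a relation-chasing argument, and it has one pleasant simplification you seem to have missed: in your remaining case $d=\langle y_0,u\rangle\notin\{0,c\}$ you may simply take $v=y_0$, since $y_0\in\ker\mu(y_0)$ (the form is alternating) and the three required non-vanishings become $\langle u,y_0\rangle=-d\neq 0$, $\langle y_0,z\rangle=c\neq 0$, and $\langle y_0,z-u\rangle=c-d\neq 0$, which are exactly the case hypotheses. This makes the ``delicate existence argument'' for $v$ (the kernel-containment and subgroup-avoidance discussion) unnecessary, though what you wrote for it is also correct.
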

        \begin{proof}
            We can take $y\in H$ with $\langle y,z\rangle \neq 0$. 
            Define $\Phi _p:C_p(\mbQ [H])_{(z)}\to C_{p+1}(\mbQ [H])_{(z)}$, $p=1,2$, by $\Phi _1([z])=\frac{-1}{\langle y,z\rangle}[y]\wedge [z-y]$ and 
            \begin{eqnarray*}
                \Phi _2([u]\wedge [v])
                &=& \frac{-1}{\langle y,z\rangle}([y]\wedge [u-y]\wedge [v]+[y]\wedge [u]\wedge [v-y]) \\
                && +\frac{1}{2 \langle y,z\rangle}[2 y]\wedge [u-y]\wedge [v-y] \\
                && +\frac{\langle u-y,v-y\rangle}{2 \langle y,z\rangle ^2}[y]\wedge [2 y]\wedge [z-3 y] . 
            \end{eqnarray*}
            We can check $\Phi _1\circ \partial _2+\partial _3\circ \Phi _2=id _{C_2(\mbQ [H])_{(z)}}$. 
            This shows the proposition. 
        \end{proof}
        Combine this proposition and the results of the section \ref{sec:inner}, we obtain the main theorem. 
        If we consider the case when $\langle -,-\rangle$ is non-degenerate, we have the following corollary. 
        \begin{Cor}
        If $\langle -,-\rangle$ is non-degenerate, we have an isomorphism 
        \[
            H_2(\mbQ [H])\to H_2(\mbQ [H^{(1)}]) \to \mbQ \otimes H . 
        \]
        \end{Cor}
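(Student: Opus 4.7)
The plan is to deduce this corollary directly from Theorem \ref{2nd homology thm}. First I would observe that non-degeneracy of $\langle -,-\rangle$ is equivalent to $\ker \mu =\{ 0\}$: by definition, $x\in \ker \mu$ means $\langle x,y\rangle =0$ for all $y\in H$, which is exactly the failure of non-degeneracy when $x\neq 0$. In particular non-degeneracy implies $\langle -,-\rangle \neq 0$ (except in the trivial case $H=0$, in which both sides of the claimed isomorphism are zero and nothing needs to be said), so the hypothesis of Theorem \ref{2nd homology thm} is satisfied.

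Next I would substitute $\ker \mu =\{ 0\}$ into both formulas from that theorem. The subspace $\mbQ [\ker \mu ]=\mbQ [0]$ is one-dimensional, so $\wedge ^2\mbQ [\ker \mu ]=0$, and the first summand of
\[
H_2(\mbQ [H])\cong (\wedge ^2\mbQ [\ker \mu ])\oplus H_2(\mbQ [H]^{(1)})
\]
vanishes. The projection $\varpi ^{(1)}$ therefore induces an isomorphism $H_2(\mbQ [H])\to H_2(\mbQ [H]^{(1)})$, accounting for the first arrow in the statement. On the other side, the direct sum indexed by $\ker \mu$ collapses to the single term $z=0$, so
\[
H_2(\mbQ [H]^{(1)})\cong \bigoplus _{z\in \ker \mu}\mbQ \otimes (H/\mbZ z)=\mbQ \otimes (H/\mbZ \cdot 0)=\mbQ \otimes H,
\]
where the explicit isomorphism is supplied by Propositions \ref{QotimesH/Zz}, \ref{inj} and \ref{surj} applied at $z=0$. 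Composing yields the asserted chain $H_2(\mbQ [H])\to H_2(\mbQ [H]^{(1)})\to \mbQ \otimes H$.

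There is essentially no obstacle here: the corollary is purely a specialization of the main theorem, and all the genuine content --- identifying the inner components with $\mbQ \otimes (H/\mbZ z)$, killing the outer components via the explicit chain homotopy $\Phi _p$ in Proposition \ref{outer}, and splitting off the center --- has already been carried out in Sections \ref{sec:inner} and in the outer-component section. The only line of commentary required is the observation that $\mbZ \cdot 0=0$, so that $H/\mbZ z$ degenerates to $H$ when $z=0$, together with the remark that a one-dimensional space has vanishing second exterior power.
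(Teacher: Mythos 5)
Your proposal is correct and matches the paper's intent exactly: the paper offers no separate argument for this corollary, presenting it as an immediate specialization of Theorem \ref{2nd homology thm} obtained by setting $\ker\mu=\{0\}$, so that $\wedge^2\mbQ[\ker\mu]=0$ and the inner sum collapses to the $z=0$ term $\mbQ\otimes H$. Your handling of the degenerate case $H=0$ also mirrors the paper's remark that the corollary still holds when $H^{(1)}=\emptyset$.
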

        We remark that this corollary holds also for the case $H=0$, that is, $H^{(1)}=\emptyset$. 
        By the universal coefficient theorem, we have another corollary. 
        \begin{Cor}
            If $\langle -,-\rangle \neq 0$, we have
            \begin{eqnarray*}
                H^2(\mbQ [H^{(1)}])_{(z)}&\cong & {\rm Hom}_{\mbZ}(H/\mbZ z,\mbQ ) \mbox{ , if } z\in \ker \mu , \\
                H^2(\mbQ [H^{(1)}])_{(z)}&=&0 \mbox{ , if } z\in H^{(1)}, \\
                H^2(\mbQ [H])_{(z)}&=& H^2(\mbQ [\ker \mu ])_{(z)}\oplus H^2(\mbQ [H^{(1)}])_{(z)} ,
            \end{eqnarray*}
            \[
                H^2(\mbQ [H^{(1)}])\cong \prod _{z\in \ker \mu}{\rm Hom}_{\mbZ}(H/\mbZ z,\mbQ ) \mbox{ and }
            \]
            \[ 
                H^2(\mbQ [H])={\rm Hom}_{\mbQ}(\wedge ^2\mbQ [\ker \mu ],\mbQ )\oplus H^2(\mbQ [H^{(1)}]). 
            \]
        \end{Cor}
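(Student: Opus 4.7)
The plan is to derive this corollary from the main theorem (Theorem \ref{2nd homology thm}) together with its component-wise refinement proven in Sections \ref{sec:inner} and the previous section, by applying the universal coefficient theorem over the field $\mbQ$. Since $\mbQ$ is a field, the Ext term vanishes and we have $H^p(\mbQ[S])_{(z)}\cong {\rm Hom}_{\mbQ}(H_p(\mbQ[S])_{(z)},\mbQ)$ for every subset $S\subset H$ with $S^{(1)}\subset S$ and every $z\in H$.

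First I would treat the inner components. For $z\in \ker \mu$, the preceding section established $H_2(\mbQ[H^{(1)}])_{(z)}\cong \mbQ\otimes_{\mbZ}(H/\mbZ z)$. Dualizing and using the standard adjunction ${\rm Hom}_{\mbQ}(\mbQ\otimes_{\mbZ}M,\mbQ)\cong {\rm Hom}_{\mbZ}(M,\mbQ)$ gives the first displayed isomorphism. Next I handle the outer components: for $z\in H^{(1)}$, Proposition \ref{outer} gives $H_2(\mbQ[H])_{(z)}=0$, which forces $H_2(\mbQ[H^{(1)}])_{(z)}=0$ by the retraction argument at the start of the outer section, and then UCT yields $H^2(\mbQ[H^{(1)}])_{(z)}=0$. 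The splitting $H^2(\mbQ[H])_{(z)}=H^2(\mbQ[\ker \mu])_{(z)}\oplus H^2(\mbQ[H^{(1)}])_{(z)}$ follows from the direct-sum decomposition $\mbQ[H]=\mbQ[\ker \mu]\oplus \mbQ[H^{(1)}]$ of Lie algebras (noting that $\mbQ[\ker \mu]$ is central), which produces a corresponding decomposition at the level of the $(z)$-components of chain complexes and hence of cohomology.

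To globalize, I would assemble the $(z)$-pieces using the isomorphism $H^p(\mbQ[S])\cong \prod_{z\in H}H^p(\mbQ[S])_{(z)}$ recorded in the preparation section. Since the outer components vanish, the product reduces to $\prod_{z\in \ker \mu}{\rm Hom}_{\mbZ}(H/\mbZ z,\mbQ)$, giving the second displayed formula. For the final formula, I would identify the contribution of the center: $\mbQ[\ker \mu]$ is an abelian Lie algebra, so its Chevalley--Eilenberg chain complex has zero differential and $H_2(\mbQ[\ker \mu])=\wedge^2\mbQ[\ker \mu]$. Dualizing yields $H^2(\mbQ[\ker \mu])={\rm Hom}_{\mbQ}(\wedge^2\mbQ[\ker \mu],\mbQ)$, which combined with the componentwise splitting above produces the claimed direct-sum description of $H^2(\mbQ[H])$.

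There is no real obstacle here: every ingredient is already available. The only bookkeeping care is to remember that taking $\mbQ$-linear duals converts the direct sum $\bigoplus_{z\in \ker \mu}$ appearing in the homology statement into the product $\prod_{z\in \ker \mu}$ in cohomology, and to verify that the Lie algebra splitting $\mbQ[H]=\mbQ[\ker \mu]\oplus \mbQ[H^{(1)}]$ (rather than a mere vector space splitting) is what produces a genuine splitting at the cohomological level, so that the final direct sum on the right-hand side is well defined.
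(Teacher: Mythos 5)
Your argument is correct and is essentially the paper's own: the paper justifies this corollary solely by invoking the universal coefficient theorem over $\mbQ$ applied to the componentwise homology computations of Sections 3 and 4, which is exactly what you carry out. The only detail worth making explicit is that the absence of a Künneth cross term in the splitting $H^2(\mbQ [H])_{(z)}=H^2(\mbQ [\ker \mu ])_{(z)}\oplus H^2(\mbQ [H^{(1)}])_{(z)}$ uses that $\mbQ [H^{(1)}]$ is perfect, so $H_1(\mbQ [H^{(1)}])=0$; this is already implicit in the statement of the main theorem.
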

\section{Applications}
Let $\mathfrak{g}_K$ be the kernel of the $\mbQ$-linear map $K:\mbQ [H]\to \mbQ \otimes H$, $[x]\mapsto 1\otimes x$. 
Then $\mathfrak{g}_K$ is a Lie subalgebra of $\mbQ [H]$. 
    \begin{Thm} \label{G_K 2nd hom}
        If $\langle -,-\rangle \neq 0$, then 
        the composition $H_2(\mathfrak{g}_K ) \to H_2(\mbQ [H]) \to H_2(\mbQ [H^{(1)}])$ is surjective, where the map $H_2(\mathfrak{g}_K )\to H_2(\mbQ [H])$ is the inclusion homomorphism and the map $H_2(\mbQ [H])\to H_2(\mbQ [H^{(1)}])$ is the induced map by the projection $\varpi ^{(1)}:\mbQ [H]=\mbQ [\ker \mu ]\oplus \mbQ [H^{(1)}]\to \mbQ [H^{(1)}]$. 
    \end{Thm}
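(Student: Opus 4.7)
The plan is to produce, for each $u \in H^{(1)}$ and $z \in \ker \mu$, an explicit 2-cycle in $\wedge^2 \mathfrak{g}_K = C_2(\mathfrak{g}_K)$ whose image under the composition equals the generator $1 \otimes u$ in the $(z)$-summand of $H_2(\mbQ[H]^{(1)}) \cong \bigoplus_{z' \in \ker \mu} \mbQ \otimes (H/\mbZ z')$. Once this is done, surjectivity onto the stated summand holds for $u \in H^{(1)}$, and extension to all $u \in H$ is immediate by the trick already used in Proposition \ref{surj}: pick $x_0 \in H^{(1)}$ and write $1 \otimes u = 1 \otimes (u - x_0) + 1 \otimes x_0$ whenever $u \in \ker \mu$. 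I combine two cycle constructions: one to anchor the class at $z = 0$, and one to translate the anchor to an arbitrary $z \in \ker \mu$.

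For the anchor, I would use
\[
\xi_1(u) := (2[u] - [2u]) \wedge (3[-u] - [-3u]).
\]
Both factors lie in $\mathfrak{g}_K$ since $K(n[x] - [nx]) = 0$, and because $u, 2u, -u, -3u$ are mutually parallel, every bracket arising in $\partial_2 \xi_1(u)$ vanishes, so $\xi_1(u)$ is a 2-cycle. In the expansion, the terms with index-sums $-2u,\, u,\, -u$ are nonzero integer multiples of $u \in H^{(1)}$ and therefore lie in $H^{(1)}$, so they do not contribute to any $(z')$-summand of $H_2(\mbQ[H]^{(1)})$; only the term $6\,[u] \wedge [-u]$ has index-sum in $\ker \mu$, and by Proposition \ref{QotimesH/Zz} it represents $6(1 \otimes u)$ in the $(z' = 0)$-summand.

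For the translation, I would use
\[
\xi_2(u, z) := ([0] - [u] - [-u]) \wedge ([-u] - [-z] - [z-u]),
\]
a wedge of two elements of $\mathfrak{g}_K$ of the form $[x+y] - [x] - [y]$. Centrality of $[0]$ and $[-z]$ (since $0, -z \in \ker \mu$) kills half the brackets in $\partial_2 \xi_2(u, z)$, and each remaining bracket has the shape $\langle u,\, az + bu\rangle\,[\,\cdot\,]$, which vanishes because $\langle u, u\rangle = 0 = \langle u, z\rangle$; hence $\xi_2(u, z)$ is a 2-cycle. Applying $\varpi^{(1)}$ kills $[0]$ and $[-z]$ and gives $(-[u] - [-u]) \wedge ([-u] - [z-u])$, whose expansion has surviving index-sums $0$, $z$, and $z - 2u$. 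The last satisfies $\mu(z - 2u) = -2\mu(u) \neq 0$ and so lies in $H^{(1)}$, contributing nothing. The first two contribute $-1 \otimes u$ in the $(z' = 0)$-summand and $+1 \otimes u$ in the $(z' = z)$-summand, respectively. Consequently $\tfrac{1}{6}\xi_1(u) + \xi_2(u, z)$ is a 2-cycle in $\wedge^2 \mathfrak{g}_K$ whose class maps to $1 \otimes u$ in the $(z)$-summand and to zero in every other summand, giving surjectivity.

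The main obstacle is that the natural cycle $[u] \wedge [z-u]$ used in Proposition \ref{surj} to represent $1 \otimes u$ is not itself a wedge of two elements of $\mathfrak{g}_K$, so it cannot simply be lifted. Since $\mathfrak{g}_K$ is generated by elements like $[x+y] - [x] - [y]$ and $n[x] - [nx]$, any wedge of these tends to contribute simultaneously to several $(z')$-summands, and the difficulty is to arrange that every unwanted term has its index-sum in $H^{(1)}$ (where it is killed by $\varpi^{(1)}_*$). The two cycles above are designed precisely for this, exploiting in each case that nonzero integer multiples of $u \in H^{(1)}$ again lie in $H^{(1)}$.
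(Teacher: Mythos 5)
Your proposal is correct and follows essentially the same strategy as the paper: exhibit explicit $2$-cycles in $C_2(\mathfrak{g}_K)$ built from elements of the form $n[x]-[nx]$ and $[x+y]-[x]-[y]$, and exploit that all unwanted cross-terms have index-sum in $H^{(1)}$, where the outer homology components vanish. The paper does this with the single cycle $([2u]-2[u])\wedge([z-2u]-2[z-u]+[z])$, whose image in $H_2(\mbQ [H^{(1)}])$ is $6[u]\wedge [z-u]$, while you use a sum of two cycles anchored at $z'=0$; the difference is cosmetic.
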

    \begin{proof}
        By Theorem \ref{2nd homology thm}, $H_2(\mbQ [H^{(1)}])$ is generated by the set $\{ [u]\wedge [z-u]\mid z \in \ker \mu ,u\in H^{(1)}\}$ since $H^{(1)}\neq \emptyset$. 
    
        We consider the chain $([2 u]-2 [u] )\wedge ([u -2 u] -2[z-u]+[z]) \in C_2(\mathfrak{g}_K)$ for $z\in \ker \mu$ and $u\in H^{(1)}$. 
        This is a cycle since $\langle u,u\rangle =0$ and $\langle u,z\rangle =0$. 
        Applying the inclusion homomorphism to the cycle, then we have $([2 u]-2 [u] )\wedge ([z -2 u] -2[z-u]+[z]) \in Z_2(\mbQ [H])$. 
        Applying the map $H_2(\varpi ^{(1)})$ to the cycle, then we have $([2 u]-2 [u] )\wedge ([z -2 u] -2[z-u]) \in Z_2(\mbQ [H^{(1)}])$. 
        The homology class of the cycle equals the homology class of $6[u]\wedge [z-u]$. 
        In fact, we have 
        \begin{eqnarray*}
            &&([2 u]-2 [u] )\wedge ([z -2 u] -2[z-u]) \\ &=& [2 u]\wedge [z-2 u] - 2 [2 u]\wedge [z-u] -2 [u]\wedge [z-2 u] +4 [u]\wedge [z-u] . 
        \end{eqnarray*}
        By Theorem \ref{2nd homology thm}, we have $[u]\wedge [v]=0\in H_2(\mbQ [H^{(1)}])$ if $u+v\in H^{(1)}$ and $[2 u]\wedge [z-2 u]=2[u]\wedge [z-u]\in H_2(\mbQ [H^{(1)}])$. 
        This completes the proof of the proposition. 
    \end{proof}
    Let $\Sigma$ be a compact surface. 
    Let $g$ be the genus of $\Sigma$ and $r$ the number of the cardinality of the set of the connected components of the boundary of $\Sigma$. 
    We consider the surjection from the Goldman Lie algebra of $\Sigma$ onto the homological Goldman Lie algebra of the first homology group of $\Sigma$ with the intersection form. 
    
    We recall the definition of the homomorphism. 
    We identify $H_1(\Sigma ;\mbZ )=\pi _1(\Sigma ,*)^{\rm abel}=\pi _1(\Sigma ,*)/[\pi _1(\Sigma ,*),\pi _1(\Sigma ,*)]$. 
    Hence we have the abelianization map $q_a:\pi _1(\Sigma ,*)\to H_1(\Sigma ,\mbZ )$. 
    This is a group homomorphism. 
    We can identify $[S^1,\Sigma ]=\pi _1(\Sigma ,\mbZ )/{\rm conj.}$ since $\Sigma$ is connected. 
    Hence we have the quotient map $q_c:\pi _1(\Sigma ,\mbZ )\to [S^1,\Sigma ]$. 
    This map is given by forgetting the base point. 
    This induces the map between the second homology groups of the Lie algebras. 
    Take a projection $q:[S^1,\Sigma ]\to H_1(\Sigma ;\mbZ )$ with $q\circ q_c=q_a$. 
    This map is independent of the choice $*\in \Sigma$. 
    Set $q=\mbQ [q]:\mbQ [[S^1,\Sigma ]]\to \mbQ [H_1(\Sigma ;\mbZ )]$ the $\mbQ$-linear extension of $q$. 
    This is the desired Lie algebra homomorphism. 
    \begin{Thm} \label{Goldman 2nd hom}
        The composition $H_2(\varpi ^{(1)})\circ H_2(q):H_2(\mbQ [[S^1,\Sigma ]])\to H_2(\mbQ [H_1(\Sigma ;\mbZ )]) \to H_2(\mbQ [H_1(\Sigma ;\mbZ )^{(1)}])$ is surjective if $\Sigma$ is connected. 
    \end{Thm}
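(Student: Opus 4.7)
The plan is to use the main theorem (Theorem~\ref{2nd homology thm}) to identify explicit generators of $H_2(\mbQ [H_1(\Sigma ;\mbZ )^{(1)}])$ and then lift each generator to a cycle in $C_2(\mbQ [[S^1,\Sigma ]])$ realized by a pair of disjoint simple closed curves. Write $H := H_1(\Sigma ;\mbZ )$. Combining Theorem~\ref{2nd homology thm} with Propositions~\ref{QotimesH/Zz} and~\ref{surj}, the summand $H_2(\mbQ [H^{(1)}])_{(z)}\cong \mbQ \otimes (H/\mbZ z)$ is generated by the homology classes of the cycles $[u]\wedge [z-u]$ for $u\in H^{(1)}$ (here $z-u\in H^{(1)}$ automatically, since $\mu (z-u)=-\mu (u)\neq 0$). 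It therefore suffices to exhibit, for each $z\in \ker \mu $ and $u\in H^{(1)}$, a cycle $\tilde c\in C_2(\mbQ [[S^1,\Sigma ]])$ whose image under $C_2(\varpi ^{(1)})\circ C_2(q)$ is $[u]\wedge [z-u]$.

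For the lift, write $u=k u_0$ and $z-u=m v_0$ with $u_0,v_0\in H$ primitive and $k,m\in \mbZ \setminus \{ 0\}$; then $\langle u_0,v_0\rangle =0$ follows from $\langle u,z-u\rangle =\langle u,z\rangle -\langle u,u\rangle =0$. By the classical fact that every primitive homology class on a connected oriented surface is represented by an oriented simple closed curve, pick such a curve $\alpha \subset \Sigma $ with $[\alpha ]=u_0$. When $u_0$ and $v_0$ are linearly independent, cut $\Sigma $ along $\alpha $ to form $\Sigma '$; the vanishing $\langle u_0,v_0\rangle =0$ ensures that $v_0$ lifts along the surjection $H_1(\Sigma ')\to H_1(\Sigma )$ to a primitive class (any coset of a proper subgroup of a finitely generated free abelian group contains a primitive element), which is represented by a simple closed curve $\beta \subset \Sigma '$; this yields disjoint curves $\alpha ,\beta \subset \Sigma $ with $[\alpha ]=u_0$ and $[\beta ]=v_0$. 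In the remaining case $v_0=\pm u_0$, the conditions $\mu (u)\neq 0$ and $\mu (z)=0$ force $z=0$, so I take $\beta $ to be a parallel copy of $\alpha $ (suitably oriented) in a tubular annular neighborhood.

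The iterated loops $\alpha ^k$ and $\beta ^m$ can then be represented in disjoint annular neighborhoods of $\alpha ,\beta $; they are free loops on $\Sigma $ with $q([\alpha ^k])=u$ and $q([\beta ^m])=z-u$. Geometric disjointness kills the Goldman bracket, so $\tilde c:=[\alpha ^k]\wedge [\beta ^m]\in C_2(\mbQ [[S^1,\Sigma ]])$ is a cycle. Its image under $C_2(q)$ is $[u]\wedge [z-u]\in C_2(\mbQ [H])$; since $u,z-u\in H^{(1)}$, the projection $\varpi ^{(1)}$ fixes it, and by Proposition~\ref{surj} the resulting homology class is the generator $1\otimes u$ of $H_2(\mbQ [H^{(1)}])_{(z)}\cong \mbQ \otimes (H/\mbZ z)$. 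As $(z,u)$ varies over $\ker \mu \times H^{(1)}$, these classes span $H_2(\mbQ [H^{(1)}])$, so the composition is surjective.

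The main obstacle is the topological input: simultaneously realizing two primitive homology classes with vanishing algebraic intersection by disjoint simple closed curves on $\Sigma $. Once this classical surface-topology fact is granted, the cycle condition in $C_2(\mbQ [[S^1,\Sigma ]])$ reduces to geometric disjointness of the loops, which trivially forces the Goldman bracket to vanish; everything else is bookkeeping aligning our cycle with the generator identified by the main theorem.
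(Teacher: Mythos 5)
Your overall strategy coincides with the paper's: use Theorem~\ref{2nd homology thm} to reduce to the generators $[u]\wedge[z-u]$ of $H_2(\mbQ [H^{(1)}])_{(z)}$ and lift each one to a wedge of two geometrically disjoint loops, disjointness killing the Goldman bracket so that the wedge is a cycle. The gap is in your topological input, and it occurs exactly in the case that carries the content of the theorem, namely $\Sigma$ compact with nonempty boundary (for closed $\Sigma$ one has $\ker\mu =0$, $z=0$, $v_0=\pm u_0$, and your parallel-copies argument works). First, the ``classical fact'' that every primitive class is represented by a simple closed curve fails for bounded surfaces: on the genus-one surface with two boundary components, with classes $x_1,y_1,C_1$ (and $C_2=-C_1$), the class $2x_1+C_1$ is primitive and lies in $H^{(1)}$, but a simple closed curve representing it would descend to a simple closed curve on the capped-off torus representing $2x_1$, which is neither zero nor primitive --- impossible. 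Second, two primitive classes with $\langle u_0,v_0\rangle =0$ need not be realizable by disjoint simple closed curves: take $z=2C_1\in \ker\mu$ and $u=x_1$, so $u_0=x_1$ and $v_0=2C_1-x_1$ are primitive, independent, with $\langle u_0,v_0\rangle =0$. If $\alpha ,\beta$ were disjoint simple closed curves in these classes, then either $\Sigma \setminus (\alpha \cup \beta )$ is connected, in which case dual curves crossing one of $\alpha ,\beta$ once and the other not at all exhibit $\mbZ u_0+\mbZ v_0=\mbZ x_1\oplus 2\mbZ C_1$ as a direct summand of $H_1(\Sigma ;\mbZ )$ (it is not: the quotient has $2$-torsion), or it is disconnected, which forces a relation $\epsilon _1u_0+\epsilon _2v_0=\sum _{j\in S}C_j$ with $\epsilon _i\in \{ 0,\pm 1\}$ not both zero, and no such relation holds. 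Your auxiliary claim that every coset of a proper subgroup of a finitely generated free abelian group contains a primitive element is also false (consider $(2,0)+(\{0\}\times 2\mbZ )$ in $\mbZ ^2$).

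The repair is the route the paper actually takes. Since the class of $[u]\wedge [z-u]$ in $H_2(\mbQ [H^{(1)}])_{(z)}\cong \mbQ \otimes (H/\mbZ z)$ is $1\otimes u$, which is additive in $u$, it suffices to realize these generators only for $u$ in the basis $A_1,B_1,\ldots ,A_g,B_g,C_1,\ldots ,C_r$ of $H$, not for every $u\in H^{(1)}$. For such $u$ the paper exhibits explicit words $\alpha ,\beta$ in the standard generators of $\pi _1(\Sigma ,*)$ with $q_a(\alpha )=u$ and $q_a(\beta )=z-u$ whose free homotopy classes have disjoint (not necessarily simple) representatives, using the further decomposition $1\otimes C_j=1\otimes (C_j-A_g)+1\otimes A_g$ to deal with $u=C_j\in \ker\mu$. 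If you reorganize your argument around a generating set in this way, the disjointness you need becomes elementary and the rest of your bookkeeping goes through.
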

    \begin{proof}
        If $H^{(1)}=\emptyset$, then we have $\mbQ [H_1(\Sigma ;\mbZ )^{(1)}]=0$. 
        Hence the proposition holds. 
        Assume $H^{(1)}\neq \emptyset$. 
        Then we have $g\geq 1$. 
        Fix a base point $*\in \Sigma$. 
        Take based oriented loops $\alpha _1,\beta _1,\ldots ,\alpha _g, \beta _g, \gamma _1,\cdots ,\gamma _r$ as follows. 
        ($g=2$, $r=3$)
        
\unitlength 0.1in
\begin{picture}( 42.0000, 17.5000)(  2.0000,-20.0000)
%
{\color[named]{Black}{%
\special{pn 20}%
\special{ar 800 1200 200 400  0.0000000  6.2831853}%
}}%
%
{\color[named]{Black}{%
\special{pn 8}%
\special{ar 800 1200 400 600  1.5707963  6.2831853}%
}}%
%
{\color[named]{Black}{%
\special{pn 8}%
\special{ar 1400 1200 400 600  1.5707963  3.1415927}%
}}%
%
{\color[named]{Black}{%
\special{pn 8}%
\special{ar 1600 1200 400 600  1.5707963  3.1415927}%
}}%
%
{\color[named]{Black}{%
\special{pn 8}%
\special{ar 1800 1200 400 600  1.5707963  3.1415927}%
}}%
%
{\color[named]{Black}{%
\special{pn 8}%
\special{pa 800 1800}%
\special{pa 2000 1800}%
\special{fp}%
\special{pa 1400 1200}%
\special{pa 1400 800}%
\special{fp}%
}}%
%
{\color[named]{Black}{%
\special{pn 8}%
\special{ar 1200 800 200 400  4.7123890  6.2831853}%
}}%
%
{\color[named]{Black}{%
\special{pn 8}%
\special{pa 1000 800}%
\special{pa 1000 794}%
\special{fp}%
\special{pa 1002 766}%
\special{pa 1002 758}%
\special{fp}%
\special{pa 1004 732}%
\special{pa 1004 724}%
\special{fp}%
\special{pa 1008 698}%
\special{pa 1008 690}%
\special{fp}%
\special{pa 1012 664}%
\special{pa 1014 658}%
\special{fp}%
\special{pa 1020 630}%
\special{pa 1022 624}%
\special{fp}%
\special{pa 1028 598}%
\special{pa 1030 590}%
\special{fp}%
\special{pa 1040 564}%
\special{pa 1042 558}%
\special{fp}%
\special{pa 1052 532}%
\special{pa 1054 526}%
\special{fp}%
\special{pa 1068 502}%
\special{pa 1070 496}%
\special{fp}%
\special{pa 1086 474}%
\special{pa 1090 468}%
\special{fp}%
\special{pa 1106 448}%
\special{pa 1112 442}%
\special{fp}%
\special{pa 1132 424}%
\special{pa 1138 420}%
\special{fp}%
\special{pa 1162 408}%
\special{pa 1168 406}%
\special{fp}%
\special{pa 1194 400}%
\special{pa 1200 400}%
\special{fp}%
}}%
%
{\color[named]{Black}{%
\special{pn 20}%
\special{ar 2000 1200 200 400  0.0000000  6.2831853}%
}}%
%
{\color[named]{Black}{%
\special{pn 8}%
\special{ar 2000 1200 400 600  1.5707963  6.2831853}%
}}%
%
{\color[named]{Black}{%
\special{pn 8}%
\special{ar 2600 1200 400 600  1.5707963  3.1415927}%
}}%
%
{\color[named]{Black}{%
\special{pn 8}%
\special{ar 2800 1200 400 600  1.5707963  3.1415927}%
}}%
%
{\color[named]{Black}{%
\special{pn 8}%
\special{ar 3000 1200 400 600  1.5707963  3.1415927}%
}}%
%
{\color[named]{Black}{%
\special{pn 8}%
\special{pa 2000 1800}%
\special{pa 3200 1800}%
\special{fp}%
\special{pa 2600 1200}%
\special{pa 2600 800}%
\special{fp}%
}}%
%
{\color[named]{Black}{%
\special{pn 8}%
\special{ar 2400 800 200 400  4.7123890  6.2831853}%
}}%
%
{\color[named]{Black}{%
\special{pn 8}%
\special{pa 2200 800}%
\special{pa 2200 792}%
\special{fp}%
\special{pa 2202 764}%
\special{pa 2202 758}%
\special{fp}%
\special{pa 2204 730}%
\special{pa 2204 722}%
\special{fp}%
\special{pa 2208 694}%
\special{pa 2208 688}%
\special{fp}%
\special{pa 2214 660}%
\special{pa 2214 652}%
\special{fp}%
\special{pa 2220 626}%
\special{pa 2222 620}%
\special{fp}%
\special{pa 2230 594}%
\special{pa 2232 586}%
\special{fp}%
\special{pa 2240 562}%
\special{pa 2242 554}%
\special{fp}%
\special{pa 2254 530}%
\special{pa 2256 524}%
\special{fp}%
\special{pa 2268 500}%
\special{pa 2272 494}%
\special{fp}%
\special{pa 2286 472}%
\special{pa 2292 466}%
\special{fp}%
\special{pa 2308 446}%
\special{pa 2314 440}%
\special{fp}%
\special{pa 2334 424}%
\special{pa 2340 420}%
\special{fp}%
\special{pa 2362 408}%
\special{pa 2368 406}%
\special{fp}%
\special{pa 2392 400}%
\special{pa 2400 400}%
\special{fp}%
}}%
%
{\color[named]{Black}{%
\special{pn 8}%
\special{pa 600 600}%
\special{pa 800 600}%
\special{ip}%
\special{sh 1}%
\special{pa 800 600}%
\special{pa 734 580}%
\special{pa 748 600}%
\special{pa 734 620}%
\special{pa 800 600}%
\special{fp}%
\special{pa 1400 1200}%
\special{pa 1400 1000}%
\special{ip}%
\special{sh 1}%
\special{pa 1400 1000}%
\special{pa 1380 1068}%
\special{pa 1400 1054}%
\special{pa 1420 1068}%
\special{pa 1400 1000}%
\special{fp}%
}}%
%
{\color[named]{Black}{%
\special{pn 8}%
\special{pa 1800 600}%
\special{pa 2000 600}%
\special{ip}%
\special{sh 1}%
\special{pa 2000 600}%
\special{pa 1934 580}%
\special{pa 1948 600}%
\special{pa 1934 620}%
\special{pa 2000 600}%
\special{fp}%
\special{pa 2600 1200}%
\special{pa 2600 1000}%
\special{ip}%
\special{sh 1}%
\special{pa 2600 1000}%
\special{pa 2580 1068}%
\special{pa 2600 1054}%
\special{pa 2620 1068}%
\special{pa 2600 1000}%
\special{fp}%
}}%
%
{\color[named]{Black}{%
\special{pn 20}%
\special{ar 600 1200 400 800  1.5707963  4.7123890}%
}}%
%
{\color[named]{Black}{%
\special{pn 20}%
\special{pa 600 400}%
\special{pa 3200 400}%
\special{fp}%
\special{pa 3200 2000}%
\special{pa 600 2000}%
\special{fp}%
}}%
%
{\color[named]{Black}{%
\special{pn 20}%
\special{pa 3200 400}%
\special{pa 4200 400}%
\special{fp}%
\special{pa 4200 800}%
\special{pa 3600 800}%
\special{fp}%
}}%
%
{\color[named]{Black}{%
\special{pn 20}%
\special{ar 4200 600 200 200  0.0000000  6.2831853}%
}}%
%
{\color[named]{Black}{%
\special{pn 8}%
\special{pa 1000 1200}%
\special{pa 1000 800}%
\special{dt 0.045}%
}}%
%
{\color[named]{Black}{%
\special{pn 8}%
\special{pa 2200 1200}%
\special{pa 2200 800}%
\special{dt 0.045}%
}}%
%
{\color[named]{Black}{%
\special{pn 20}%
\special{pa 3600 1000}%
\special{pa 4200 1000}%
\special{fp}%
\special{pa 4200 1400}%
\special{pa 3600 1400}%
\special{fp}%
\special{pa 3600 1600}%
\special{pa 4200 1600}%
\special{fp}%
\special{pa 4200 2000}%
\special{pa 3200 2000}%
\special{fp}%
}}%
%
{\color[named]{Black}{%
\special{pn 20}%
\special{ar 4200 1800 200 200  0.0000000  6.2831853}%
}}%
%
{\color[named]{Black}{%
\special{pn 20}%
\special{ar 4200 1200 200 200  0.0000000  6.2831853}%
}}%
%
{\color[named]{Black}{%
\special{pn 20}%
\special{ar 3600 900 100 100  1.5707963  4.7123890}%
}}%
%
{\color[named]{Black}{%
\special{pn 20}%
\special{ar 3600 1500 100 100  1.5707963  4.7123890}%
}}%
%
{\color[named]{Black}{%
\special{pn 4}%
\special{sh 1}%
\special{ar 3200 1800 16 16 0  6.28318530717959E+0000}%
}}%
%
{\color[named]{Black}{%
\special{pn 8}%
\special{pa 3200 1800}%
\special{pa 3200 600}%
\special{fp}%
}}%
%
{\color[named]{Black}{%
\special{pn 8}%
\special{ar 3600 600 400 200  3.1415927  4.7123890}%
}}%
%
{\color[named]{Black}{%
\special{pn 8}%
\special{pa 3600 400}%
\special{pa 3608 400}%
\special{fp}%
\special{pa 3634 404}%
\special{pa 3640 404}%
\special{fp}%
\special{pa 3664 410}%
\special{pa 3670 412}%
\special{fp}%
\special{pa 3690 422}%
\special{pa 3696 424}%
\special{fp}%
\special{pa 3714 436}%
\special{pa 3718 440}%
\special{fp}%
\special{pa 3736 454}%
\special{pa 3740 458}%
\special{fp}%
\special{pa 3756 474}%
\special{pa 3758 478}%
\special{fp}%
\special{pa 3772 496}%
\special{pa 3774 502}%
\special{fp}%
\special{pa 3784 520}%
\special{pa 3786 526}%
\special{fp}%
\special{pa 3794 550}%
\special{pa 3796 556}%
\special{fp}%
\special{pa 3800 580}%
\special{pa 3800 586}%
\special{fp}%
\special{pa 3800 614}%
\special{pa 3800 622}%
\special{fp}%
\special{pa 3796 646}%
\special{pa 3794 652}%
\special{fp}%
\special{pa 3786 676}%
\special{pa 3784 680}%
\special{fp}%
\special{pa 3774 700}%
\special{pa 3772 704}%
\special{fp}%
\special{pa 3758 722}%
\special{pa 3756 728}%
\special{fp}%
\special{pa 3740 744}%
\special{pa 3736 748}%
\special{fp}%
\special{pa 3718 762}%
\special{pa 3714 764}%
\special{fp}%
\special{pa 3696 776}%
\special{pa 3690 780}%
\special{fp}%
\special{pa 3670 788}%
\special{pa 3664 790}%
\special{fp}%
\special{pa 3640 796}%
\special{pa 3634 798}%
\special{fp}%
\special{pa 3608 800}%
\special{pa 3600 800}%
\special{fp}%
}}%
%
{\color[named]{Black}{%
\special{pn 8}%
\special{ar 3600 1000 400 200  3.1415927  4.7123890}%
}}%
%
{\color[named]{Black}{%
\special{pn 8}%
\special{pa 3800 400}%
\special{pa 3800 600}%
\special{ip}%
\special{sh 1}%
\special{pa 3800 600}%
\special{pa 3820 534}%
\special{pa 3800 548}%
\special{pa 3780 534}%
\special{pa 3800 600}%
\special{fp}%
}}%
%
{\color[named]{Black}{%
\special{pn 8}%
\special{ar 3600 1600 400 200  3.1415927  4.7123890}%
}}%
%
{\color[named]{Black}{%
\special{pn 8}%
\special{pa 3600 1000}%
\special{pa 3608 1000}%
\special{fp}%
\special{pa 3634 1004}%
\special{pa 3640 1004}%
\special{fp}%
\special{pa 3664 1010}%
\special{pa 3670 1012}%
\special{fp}%
\special{pa 3690 1022}%
\special{pa 3696 1024}%
\special{fp}%
\special{pa 3714 1036}%
\special{pa 3718 1040}%
\special{fp}%
\special{pa 3736 1054}%
\special{pa 3740 1058}%
\special{fp}%
\special{pa 3756 1074}%
\special{pa 3758 1078}%
\special{fp}%
\special{pa 3772 1096}%
\special{pa 3774 1102}%
\special{fp}%
\special{pa 3784 1120}%
\special{pa 3786 1126}%
\special{fp}%
\special{pa 3794 1150}%
\special{pa 3796 1156}%
\special{fp}%
\special{pa 3800 1180}%
\special{pa 3800 1186}%
\special{fp}%
\special{pa 3800 1214}%
\special{pa 3800 1222}%
\special{fp}%
\special{pa 3796 1246}%
\special{pa 3794 1252}%
\special{fp}%
\special{pa 3786 1276}%
\special{pa 3784 1280}%
\special{fp}%
\special{pa 3774 1300}%
\special{pa 3772 1304}%
\special{fp}%
\special{pa 3758 1322}%
\special{pa 3756 1328}%
\special{fp}%
\special{pa 3740 1344}%
\special{pa 3736 1348}%
\special{fp}%
\special{pa 3718 1362}%
\special{pa 3714 1364}%
\special{fp}%
\special{pa 3696 1376}%
\special{pa 3690 1380}%
\special{fp}%
\special{pa 3670 1388}%
\special{pa 3664 1390}%
\special{fp}%
\special{pa 3640 1396}%
\special{pa 3634 1398}%
\special{fp}%
\special{pa 3608 1400}%
\special{pa 3600 1400}%
\special{fp}%
}}%
%
{\color[named]{Black}{%
\special{pn 8}%
\special{pa 3800 1000}%
\special{pa 3800 1200}%
\special{ip}%
\special{sh 1}%
\special{pa 3800 1200}%
\special{pa 3820 1134}%
\special{pa 3800 1148}%
\special{pa 3780 1134}%
\special{pa 3800 1200}%
\special{fp}%
}}%
%
{\color[named]{Black}{%
\special{pn 8}%
\special{ar 3600 1200 400 200  3.1415927  4.7123890}%
}}%
%
{\color[named]{Black}{%
\special{pn 8}%
\special{ar 3600 1800 400 200  3.1415927  4.7123890}%
}}%
%
{\color[named]{Black}{%
\special{pn 8}%
\special{pa 3800 1600}%
\special{pa 3800 1800}%
\special{ip}%
\special{sh 1}%
\special{pa 3800 1800}%
\special{pa 3820 1734}%
\special{pa 3800 1748}%
\special{pa 3780 1734}%
\special{pa 3800 1800}%
\special{fp}%
}}%
%
{\color[named]{Black}{%
\special{pn 8}%
\special{pa 3600 1600}%
\special{pa 3608 1600}%
\special{fp}%
\special{pa 3634 1604}%
\special{pa 3640 1604}%
\special{fp}%
\special{pa 3664 1610}%
\special{pa 3670 1612}%
\special{fp}%
\special{pa 3690 1622}%
\special{pa 3696 1624}%
\special{fp}%
\special{pa 3714 1636}%
\special{pa 3718 1640}%
\special{fp}%
\special{pa 3736 1654}%
\special{pa 3740 1658}%
\special{fp}%
\special{pa 3756 1674}%
\special{pa 3758 1678}%
\special{fp}%
\special{pa 3772 1696}%
\special{pa 3774 1702}%
\special{fp}%
\special{pa 3784 1720}%
\special{pa 3786 1726}%
\special{fp}%
\special{pa 3794 1750}%
\special{pa 3796 1756}%
\special{fp}%
\special{pa 3800 1780}%
\special{pa 3800 1786}%
\special{fp}%
\special{pa 3800 1814}%
\special{pa 3800 1822}%
\special{fp}%
\special{pa 3796 1846}%
\special{pa 3794 1852}%
\special{fp}%
\special{pa 3786 1876}%
\special{pa 3784 1880}%
\special{fp}%
\special{pa 3774 1900}%
\special{pa 3772 1904}%
\special{fp}%
\special{pa 3758 1922}%
\special{pa 3756 1928}%
\special{fp}%
\special{pa 3740 1944}%
\special{pa 3736 1948}%
\special{fp}%
\special{pa 3718 1962}%
\special{pa 3714 1964}%
\special{fp}%
\special{pa 3696 1976}%
\special{pa 3690 1980}%
\special{fp}%
\special{pa 3670 1988}%
\special{pa 3664 1990}%
\special{fp}%
\special{pa 3640 1996}%
\special{pa 3634 1998}%
\special{fp}%
\special{pa 3608 2000}%
\special{pa 3600 2000}%
\special{fp}%
}}%
%
{\color[named]{Black}{%
\special{pn 8}%
\special{ar 3600 1800 400 200  1.5707963  3.1415927}%
}}%
\put(6.0000,-5.8000){\makebox(0,0)[lb]{$\alpha _1$}}%
\put(10.0000,-3.8000){\makebox(0,0)[lb]{$\beta _1$}}%
\put(22.0000,-3.8000){\makebox(0,0)[lb]{$\beta _2$}}%
\put(18.0000,-5.8000){\makebox(0,0)[lb]{$\alpha _2$}}%
\put(34.0000,-5.8000){\makebox(0,0)[lb]{$\gamma _1$}}%
\put(34.0000,-11.8000){\makebox(0,0)[lb]{$\gamma _2$}}%
\put(34.0000,-17.8000){\makebox(0,0)[lb]{$\gamma _3$}}%
\end{picture}%

        The set $\{ \alpha _1,\beta _1,\ldots ,\alpha _g, \beta _g, \gamma _1,\cdots ,\gamma _r \}$ is a generator system of the fundamental group $\pi _1(\Sigma ,*)$ of $\Sigma$. 
        They satisfy the relation 
        \[
            \alpha _1\beta _1 \alpha _1^{-1}\beta _1^{-1}\cdots \alpha _g\beta _g \alpha _g^{-1}\beta _g^{-1}\gamma _1\cdots \gamma _r=1
        \]
        which is a defining relation of the group $\pi _1(\Sigma ,*)$. 
        Set $q_a(\alpha _i)=A_i$, $q_a(\beta _i)=B_i$ and $q_a(\gamma _j)=C_j$. 
        They satisfy $\langle A_i,A_j\rangle =\langle B_1,B_j\rangle =\langle C_i,x\rangle =0$ and $\langle A_i,B_i\rangle =\delta _{i,j}$. 
        In particular, the kernel $\ker \mu$ is generated by $C_1,\ldots ,C_r\in H$ with the defining relation $C_1+\cdots +C_r=0$. 
        Theorem \ref{2nd homology thm} says that the set $\{ [u]\wedge [z-u]\mid x \in \{ A_1,B_1,\ldots ,A_g,B_g,C_1,\ldots ,C_r\} ,z\in \ker \mu \}$ generates $H_2(\mbQ [H^{(1)}])$ as a $\mbQ$-vector space. 
        
        Let $z\in \ker \mu$. 
        Then there exist $m_1,\ldots ,m_r\in \mbZ$ with $z=m_1 C_1+\cdots +m_r C_r$. 

        Assume $u=A_i$. 
        Set $\alpha =\alpha _i$ and $\beta =\gamma _r^{m_r}\cdots \gamma _1^{m_1}\alpha _i^{-1}$. 
        We may assume $q_c(\alpha )\cap q_c(\beta )=\emptyset$, that is, there exist $A\in q_c(\alpha )$ and $B\in q_c(\beta )$ with $A(S^1)\cap B(S^1)=\emptyset$. 
        This shows that $[q_c(\alpha )]\wedge [q_c(\beta )]\in C_2(\mbQ [[S^1,\Sigma ]])$ is a cycle by the definition of the bracket in $\mbQ [[S^1,\Sigma ]]$. 
        We have 
        \begin{eqnarray*}
            C_2(\varpi ^{(1)})\circ C_2(q) ([q_c(\alpha )]\wedge [q_c(\beta )] )
            &=& C_2(\varpi ^{(1)})([q\circ q_c(\alpha )]\wedge [q\circ q_c(\beta )]) \\
            &=& C_2(\varpi ^{(1)})([A_i]\wedge [z-A_i]). 
        \end{eqnarray*}
        Hence we have the homology class of $[A_i]\wedge [z-A_i]$ is in the image of $H_2(\varpi ^{(1)})\circ H_2(q)$. 
        
        Assume $u=B_i$. 
        If we set $\alpha =\beta _i$ and $\beta =\gamma _r^{m_r}\cdots \gamma _1^{m_1}\beta _i^{-1}$, then we have that the homology class of $[B_i]\wedge [z-B_i]$ is included in the image of $H_2(\varpi ^{(1)})\circ H_2(q)$ similarly. 
        
        Assume $u=C_j$. 
        We remark $C_2(\varpi ^{(1)})\circ C_2(q)([q_c(\gamma _j)]\wedge [q_c(\gamma _r^{m_r}\cdots \gamma _1^{m_1}\gamma _j^{-1})])=0$ since $C_j\in \ker \mu$. 
        By Theorem \ref{2nd homology thm}, we have $[C_j]\wedge [z-C_j]=[C_j-A_g]\wedge [z-C_j+A_g]+[A_g]\wedge [z-A_g] \in H_2(\mbQ [H_1(\Sigma ;\mbZ )^{(1)}])$. 
        Hence it is enough to show that $[C_j-A_g]\wedge [z-C_j+A_g]$ is in the image. 
        Set $\alpha =\gamma _j \gamma _r^{-1}\cdots \gamma _r^{-1}\alpha _g^{-1}$ and 
        \[
            \beta =\alpha _g\gamma _1\cdots \gamma _r \gamma _r^{m_r-m_j}\cdots \gamma _{j+1}^{m_{j+1}-m_j}\gamma _j^{-1}\gamma _{j-1}^{m_{j-1}-m_j}\cdots \gamma _1^{m_1-m_j}. 
        \]
        We may also assume $q_c(\alpha )\cap q_c(\beta )=\emptyset$. 
        For example, see the following figure ($g=2$, $r=3$, $j=2$, $m_1-m_j=2$, $m_3-m_j=1$). 

        \input{S_2,3_with_arrows2.tex}
        
        We have $q_a(\alpha )=C_j-A_g$ and $q_a(\beta )=z-C_j+A_g$ by $C_1+\cdots +C_r=0$. 
        This completes the proof of the proposition. 
    \end{proof}
    Essentially, we have the following lemma at the proof of the Proposition \ref{inj}. 
    \begin{Lem}\label{linear extension}
        Let $f:H^{(1)}\to \mbZ$ be a function satisfying $f(u+v)=f(u)+f(v)$ for $u,v\in H^{(1)}$ with $\langle u,v\rangle \neq 0$. 
        Then we have $f(u+v)=f(u)+f(v)$ for $u,v\in H^{(1)}$ with $u+v\in H^{(1)}$, and $f(n u)=n f(u)$ for $u\in H^{(1)}$ and $n\in \mbZ \setminus \{ 0\}$. 
    \end{Lem}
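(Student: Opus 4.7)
The strategy is to extract and generalize the argument that appeared in the proof of Proposition \ref{inj}: when the hypothesis $\langle u,v\rangle\neq 0$ fails, we splice in an auxiliary element $x\in H$ whose pairings with $u$, $v$, and $u+v$ are all nonzero, so that we can apply the given additivity three times around the element $u+v+x$.

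For the first claim, fix $u,v\in H^{(1)}$ with $u+v\in H^{(1)}$. Choose witnesses $y_1,y_2,y_3\in H$ with $\langle u,y_1\rangle\neq 0$, $\langle v,y_2\rangle\neq 0$, and $\langle u+v,y_3\rangle\neq 0$, and consider the one-parameter family $x_N:=y_1+Ny_2+N^2 y_3$ for $N\in\mbZ$. The three pairings $\langle u,x_N\rangle$, $\langle v,x_N\rangle$, $\langle u+v,x_N\rangle$ are polynomials in $N$ whose constant, linear, and quadratic coefficients (respectively) are nonzero, so each polynomial has only finitely many integer zeros; hence for all sufficiently large $N$, all three pairings are nonzero. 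Fix such an $x=x_N$. Then $\langle u,x\rangle\neq 0$ gives $f(u+x)=f(u)+f(x)$; since $\langle u+x,v\rangle=\langle x,v\rangle\neq 0$, we obtain $f(u+x+v)=f(u+x)+f(v)=f(u)+f(v)+f(x)$. On the other hand, $\langle u+v,x\rangle\neq 0$ gives $f(u+v+x)=f(u+v)+f(x)$. Comparing the two values of $f(u+v+x)$ yields $f(u+v)=f(u)+f(v)$.

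For the second claim, note first that $\mu(nu)=n\mu(u)\neq 0$ in the torsion-free module ${\rm Hom}_{\mbZ}(H,\mbZ)$ for any $n\neq 0$, so $nu\in H^{(1)}$. For $n\geq 1$, induction together with the first claim applied to $(nu,u)$ (using that $(n+1)u\in H^{(1)}$) gives $f(nu)=nf(u)$. To handle negative $n$, choose $v\in H^{(1)}$ with $\langle u,v\rangle\neq 0$; then the hypothesis applied to $(-u,v)$ and to $(v-u,u)$ gives $f(v-u)=f(-u)+f(v)$ and $f(v)=f(v-u)+f(u)$, whence $f(-u)=-f(u)$. Combining, $f(nu)=nf(u)$ for all $n\in\mbZ\setminus\{0\}$.

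The main obstacle is the very first step: producing the auxiliary element $x$ simultaneously satisfying three nondegeneracy conditions in an abelian group that need not be finitely generated. Once this is in hand, the rest is a direct algebraic manipulation using only the given hypothesis.
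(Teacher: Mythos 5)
Your proof is correct and follows essentially the same route as the paper: both arguments splice in an auxiliary $x$ with $\langle u,x\rangle$, $\langle v,x\rangle$, $\langle u+v,x\rangle$ all nonzero, compare two expansions of $f(u+v+x)$, and then reduce the second claim to the case $n=-1$; your polynomial trick $x_N=y_1+Ny_2+N^2y_3$ merely makes explicit an existence claim the paper leaves unjustified. One small point: the identity $\langle u+x,v\rangle=\langle x,v\rangle$ presupposes $\langle u,v\rangle=0$, so you should first dispose of the case $\langle u,v\rangle\neq 0$ (where the claim is the hypothesis itself), as the paper does.
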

    \begin{proof}
        Take $u,v\in H^{(1)}$ with $u+v\in H^{(1)}$. 
        Then there exists $x\in H^{(1)}$ with $\langle u,x\rangle \neq 0$, $\langle v,x\rangle \neq 0$ and $\langle u+v,x\rangle \neq 0$. 
        If $\langle u,v\rangle \neq 0$, we have $f(u+v)=f(u)+f(v)$ by the assumption of the lemma. 
        Assume $\langle u,v\rangle =0$. 
        Then we have $\langle u,v+x\rangle =\langle u,x\rangle \neq 0$. 
        By the assumption of the lemma, we have $f(u+v)=f(u)+f(v)$ because 
        \begin{eqnarray*}
            f(u+v)
            &=&f(u+v+x)-f(x) 
             = f(u)+f(v+x)-f(x) \\
            &=& f(u)+f(v)+f(x)-f(x)=f(u)+f(v) . 
        \end{eqnarray*}
        
        Take $u\in H^{(1)}$ and $n\in \mbZ \setminus \{ 0\}$. 
        Then there exists $x\in H^{(1)}$ with $\langle u,x\rangle \neq 0$. 
        By the result of the first half of the lemma, it is enough to show the case $n=-1$. 
        We obtain $f(-u)=-f(u)$ because 
        \begin{eqnarray*}
            f(-u)
            &=&-f(u+x)+f(x)
            = -f(u)-f(x)-f(x) 
            =-f(u). 
        \end{eqnarray*}
        This completes the proof of the lemma. 
    \end{proof}
    Assume $z\in \ker \mu$. 
    Then we have $\langle -,-\rangle \in {\rm Hom}_{\mbZ}(\wedge ^2 H/\mbZ z,\mbQ )$. 
    This corresponds the cocycle $\omega \in Z^3(\mbQ [H^{(1)}])_{(z)}$ or $\omega \in Z^3(\mbQ [H])_{(z)}$ with $\omega ([u],[v],[z-u-v])=\langle u,v\rangle$ for $u,v\in H^{(1)}$ by Proposition  \ref{QotimesH/Zz}. 
    The projection $\varpi ^{(1)}:\mbQ [H]=\mbQ [\ker \mu ]\oplus \mbQ [H^{(1)}]\to \mbQ [H^{(1)}]$ induces injection $H^3(\varpi ^{(1)})_{(z)}: H^3(\mbQ [H^{(1)}])_{(z)}\to H^3(\mbQ [H])_{(z)}$ since $\varpi ^{(1)} \circ \iota ^{(1)}=id _{\mbQ [H^{(1)}]}$, where the map $\iota ^{(1)}:\mbQ [H^{(1)}]\to \mbQ [H]$ is the inclusion. 
    Therefore $[\omega ]=0\in H^3(\mbQ [H])_{(z)}$ if $[\omega ]=0 \in H^3(\mbQ [H^{(1)}])_{(z)}$, and $[\omega ]\neq 0 \in H^3(\mbQ [H^{(1)}])_{(z)}$ if $[\omega ]\neq 0\in H^3(\mbQ [H])_{(z)}$. 
    \begin{Thm} \label{3rd coh}
        Assume $\langle -,-\rangle \neq 0$. 
        If $z\in \ker \mu$ is a torsion element, i.e., there exists $n\in \mbZ\setminus \{ 0\}$ with $nz=0$, then we have $[\omega ]\neq 0\in H^3(\mbQ [H])_{(z)}$. 
        If $z\in \ker \mu$ is not a torsion element, i.e., $nz\neq 0$ for any $n\in \mbZ \setminus \{ 0\}$, then we have $[\omega ]=0\in H^3(\mbQ [H^{(1)}])_{(z)}$. 
    \end{Thm}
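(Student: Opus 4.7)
The plan is to translate the coboundary equation into a functional equation on $H^{(1)}$ and then determine when it admits a solution. Given a candidate primitive $\eta \in C^2(\mbQ [H^{(1)}])_{(z)}$, set $f(u) := \eta ([u],[z-u])$ for $u\in H^{(1)}$, using that $z-u\in H^{(1)}$ since $\mu (z-u)=-\mu (u)$. The alternating property of $\eta $ forces $f(z-u)=-f(u)$. A direct computation of $(d\eta )([u],[v],[z-u-v])$, using that $z\in \ker \mu $ gives $\langle u,z-u-v\rangle =-\langle u,v\rangle $ and $\langle v,z-u-v\rangle =\langle u,v\rangle $, collapses the three terms into $\langle u,v\rangle \bigl( f(u+v)-f(u)-f(v)\bigr)$. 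Hence $d\eta =\omega $ is equivalent to
\[
f(u+v)=f(u)+f(v)+1 \quad \mbox{for all } u,v,u+v\in H^{(1)} \mbox{ with } \langle u,v\rangle \neq 0.
\]

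For the non-torsion case, the inclusion $\mbZ z\hookrightarrow H$ lets one extend the assignment $nz\mapsto 2n$, via injectivity of $\mbQ $ as a $\mbZ $-module, to a group homomorphism $\tilde h:H\to \mbQ $ with $\tilde h(z)=2$. Setting $f(u):=\tilde h(u)-1$ visibly satisfies both the defect-$1$ additivity above and the antisymmetry $f(z-u)=-f(u)$, so the induced $\eta $ is the desired primitive in $C^2(\mbQ [H^{(1)}])_{(z)}$.

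For the torsion case, I would argue by contradiction: assume such $\eta $ exists. Substituting $g(u):=f(u)+1$ converts the two conditions to $g(u+v)=g(u)+g(v)$ whenever $\langle u,v\rangle \neq 0$ and $g(z-u)=2-g(u)$. Lemma \ref{linear extension} (whose proof works verbatim for $\mbQ $-valued $g$) then makes $g$ additive on all pairs $u,v\in H^{(1)}$ with $u+v\in H^{(1)}$, and gives $g(nu)=ng(u)$ for $n\in \mbZ \setminus \{ 0\} $. The decisive step is to establish $g(u+z)=g(u)+2$ for every $u\in H^{(1)}$: choose $v\in H^{(1)}$ with $\langle u,v\rangle \neq 0$, so that $u+v\in H^{(1)}$, $z-v\in H^{(1)}$, and $\langle z-v,u+v\rangle =-\langle v,u\rangle \neq 0$; then additivity applied to the decomposition $u+z=(z-v)+(u+v)$ yields $g(u+z)=(2-g(v))+(g(u)+g(v))=g(u)+2$. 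Iterating, $g(u+nz)=g(u)+2n$; but $nz=0$ then forces $2n=0$, contradicting $n\neq 0$. Therefore $[\omega ]\neq 0$ in $H^3(\mbQ [H^{(1)}])_{(z)}$, and by injectivity of $H^3(\varpi ^{(1)})_{(z)}$ also $[\omega ]\neq 0$ in $H^3(\mbQ [H])_{(z)}$.

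The main obstacle is the torsion case, and within it the verification that the decomposition $u+z=(z-v)+(u+v)$ satisfies all the hypotheses ($u+v,z-v\in H^{(1)}$ and $\langle z-v,u+v\rangle \neq 0$) needed to invoke the extended additivity of $g$. Once this lemma-driven extension is in place, the contradiction via the torsion of $z$ is purely formal.
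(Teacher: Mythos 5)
Your proposal is correct and follows essentially the same route as the paper: translate $d\eta=\omega$ into the defect-one additivity $f(u+v)=f(u)+f(v)\pm 1$, invoke Lemma \ref{linear extension} to extend additivity of the shifted function $g$, derive $2n=0$ from the torsion of $z$, and in the non-torsion case build a primitive from a homomorphism $H\to\mbQ$ obtained by divisibility of $\mbQ$. The only (harmless) deviations are that you reach the contradiction by iterating $g(u+z)=g(u)+2$ instead of the paper's computation with $g(n(z-u))=g(-nu)$, and your explicit primitive $\eta=\tilde h-1$ with $\tilde h(z)=2$ in fact satisfies the antisymmetry check more cleanly than the formula as printed in the paper.
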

    \begin{proof}
        {\bf Case 1: } 
        Assume that $z$ is a torsion element. 
        Then there exists $n\in \mbZ\setminus \{ 0\}$ with $nz=0$. 
        Assume $[\omega ]=0\in H^3(\mbQ [H])_{(z)}$. 
        Then there exists $\eta \in C^2(\mbQ [H])_{(z)}$ with $d\eta =\omega$. 
        Set the map $f:H\to \mbQ$ by $f(x)=\eta ([x],[z-x])-1$ for $x\in H$. 
        For $u,v\in H$ with $\langle u,v\rangle \neq 0$, we have $f(u+v)=f(u)+f(v)$ since 
        \begin{eqnarray*}
            \langle u,v\rangle 
            &=& \omega ([u],[v],[z-u-v]) \\
            &=& d\eta ([u],[v],[z-u-v]) \\
            &=& -\langle u,v\rangle (\eta ([u+v],[z-u-v])-\eta ([u],[z-u])-\eta ([v],[z-v])) . 
        \end{eqnarray*}
        Therefore we have a contradiction because 
        \begin{eqnarray*}
        n \eta ([z-u],[u])
        &=& n f(z-u)+n \\
        &=& f(n z-n u) +n \\
        &=& -n f(u) +n \\
        &=& -n \eta ([u],[z-u])+n +n \\
        &=& n \eta ([z-u],[u]) +2 n 
        \end{eqnarray*}
        for $u\in H^{(1)}$ by Lemma \ref{linear extension}. 
        Hence we obtain $[\omega ]\neq 0\in H^3(\mbQ [H])_{(z)}$. 
        
        {\bf Case 2: } 
        Assume that $z$ is not a torsion element. 
        Then there exists a $\mbZ$-linear map $f:H\to \mbQ$ with $f(z)=1$. 
        Set a map $\eta :\{ (u,v)\mid u,v\in H^{(1)} , u+v=z\} \to \mbQ$ by $\eta (u,z-u)=f(u)+1$. 
        This induces $\eta \in C^2(\mbQ [H^{(1)}])_{(z)}$ since $\eta (z-u,u)=f(z-u)-1=f(z)-f(u)-1=-\eta (u,z-u)$ for $u\in H^{(1)}$. 
        We have $\omega =d\eta$. 
        In fact, for $u,v\in H^{(1)}$ with $u+v\in H^{(1)}$, we have 
        \begin{eqnarray*}
            &&d\eta ([u],[v],[z-u-v]) \\
            &=& -\langle u,v\rangle (\eta ([u+v],[z-u-v])-\eta ([u],[z-u])-\eta ([v],[z-v])) \\
            &=& -\langle u,v\rangle (f(u+v)+1-f(u)-1-f(v)-1) \\
            &=& \langle u,v\rangle \\
            &=& \omega ([u],[v],[z-u-v]) . 
        \end{eqnarray*}
        Hence we obtain $[\omega ]=0 \in H^3(\mbQ [H^{(1)}])_{(z)}$. 
        This completes the proof of the proposition. 
    \end{proof}
    We have that the map ${\rm Hom}_{\mbZ}(\wedge ^2 H/\mbZ z,\mbQ )\to H^3(\mbQ [H^{(1)}])_{(z)}$ is not injective if $\ker \mu$ is not torsion free. 
    Moreover we have that the map ${\rm Hom}_{\mbZ}(\wedge ^2 H_1(T^2;\mbZ ),\mbQ )\to H^3(\mbQ [H_1(T^2;\mbZ )^{(1)}])_{(0)}$ is injective since the $\mbQ$-vector space ${\rm Hom}_{\mbZ}(\wedge ^2 H_1(T^2;\mbZ ),\mbQ )$ is generated by $\langle -,-\rangle$, where $T^2$ is the torus.

\noindent \textsc{Kazuki Toda\\
Graduate School of Mathematical Sciences, \\
University of Tokyo, \\
3-8-1 Komaba Meguro-ku, Tokyo 153-8914, JAPAN}\\
\noindent \texttt{E-mail address:ktoda@ms.u-tokyo.ac.jp}

\end{document}